\newcommand{\bull}{\textbullet\ }
\newcommand{\dd}{\mathrm{d}}
\newcommand{\rec}[1]{\frac{1}{#1}}
\newcommand{\mini}{\wedge}
\newcommand{\RR}{{\mathds{R}}}     
\newcommand{\I}{{\mathds{1}}}      
\newcommand{\II}[1]{{\mathds{1}\!\left\{#1\right\}}}
\newcommand{\defEqual}{=}
\newcommand{\distEqual}{\overset{\text{d}}{=}}
\newcommand{\iid}{i.i.d.\ }
\DeclareDocumentCommand\E{m o o o o}{
	\mathds{E}%
	\def\@tempa{#4} \ifx\@tempa\@empty\relax{}\else \IfNoValueF{#4}{_{#4}}\fi
	\def\@tempa{#5} \ifx\@tempa\@empty\relax{}\else \IfNoValueF{#5}{^{#5}}\fi
	\!\left[
		#1
		\def\@tempa{#2} \ifx\@tempa\@empty\relax{}\else \IfNoValueF{#2}{\, | \, #2} \fi
		\def\@tempa{#3} \ifx\@tempa\@empty\relax{}\else \IfNoValueF{#3}{\, ; \, #3} \fi
	\right]
}
\DeclareDocumentCommand\PP{m o o o o o}{
	\mathds{P}%
	\def\@tempa{#2} \ifx\@tempa\@empty\relax{}\else \IfNoValueF{#2}{_{#2}}\fi
	\def\@tempa{#3} \ifx\@tempa\@empty\relax{}\else \IfNoValueF{#3}{^{#3}}\fi
	\!\left[
		#1
		\def\@tempa{#4} \ifx\@tempa\@empty\relax{}\else \IfNoValueF{#4}{\, | \, #4} \fi
		\def\@tempa{#5} \ifx\@tempa\@empty\relax{}\else \IfNoValueF{#5}{\, ; \, #5} \fi
	\right]
}
\newcommand{\dens}{g}       
\newcommand{\dist}{G}       
\newcommand{\rdist}{\overline{\dist}} 
\newcommand{\GammaD}[2]{{\mathrm{Gamma}\!\left( #1, #2 \right)}}
\newcommand{\M}{\mathcal{M}}
\newcommand{\PPSpace}[1]{ \M^{\#}(#1)}
\newcommand{\AveSojourn}{\theta}
\DeclareDocumentCommand\LF{o o}{
	\mathcal{L}%
	\def\@tempa{#2} \ifx\@tempa\@empty\relax \else \IfNoValueF{#2}{_{#2}}\fi
	\def\@tempa{#1} \ifx\@tempa\@empty\relax \else \IfNoValueF{#1}{\!\left[#1 \right]}\fi
}
\DeclareDocumentCommand\expLF{o o}{
	\mathcal{\ell}%
	\def\@tempa{#2} \ifx\@tempa\@empty\relax \else \IfNoValueF{#2}{_{#2}}\fi
	\def\@tempa{#1} \ifx\@tempa\@empty\relax \else \IfNoValueF{#1}{\!\left[#1 \right]}\fi
}
\DeclareDocumentCommand\Palm{m}{Q_{#1}}
\DeclareDocumentCommand\APalm{m}{\overline{Q}_{#1}}
\newcommand{\CSz}{C}
\newcommand{\NMiss}{\mu}       
\newcommand{\Error}{e}
\newcommand{\AveNMiss}{m}
\newcommand{\IAveNMiss}{M}
\newcommand{\MinAveNMiss}{m_0} 
\newcommand{\Req}{\Theta}   
\newcommand{\NReqs}{N}      
\newcommand{\Pop}{R}        
\newcommand{\HitProba}[1]{q_{#1}}
\newcommand{\MissProba}[1]{p_{#1}}
\newcommand{\Lifespan}{L} 
\newcommand{\CatPP}{\Gamma^g}
\newcommand{\CatArrRate}{\gamma}
\newcommand{\MarkCatPP}{\widetilde{\Gamma}}
\newcommand{\CatArr}{a}
\newcommand{\DocReqPP}{\xi}
\newcommand{\DocReqInt}{\lambda}
\newcommand{\DocReqMeas}{\Lambda}
\newcommand{\DocReqMeanFn}{\Lambda}
\newcommand{\DocReqMeanComp}{\bar{\Lambda}}
\newcommand{\AveNReqs}{\Lambda}  
\newcommand{\NDifDocs}{X}
\newcommand{\NDifDocsMean}{\Xi}
\newcommand{\TotReqPP}{\Gamma}
\newcommand{\iReq}{r}
\newcommand{\ExitTime}[1]{T_{#1}}
\newcommand{\ChT}[1]{t_{#1}}
\newcommand{\FPTPoiss}[1]{\widehat{T}_{#1}}
\newtheoremstyle{break}
  {\topsep}{\topsep}%
  {\itshape}{}%
  {\bfseries}{}%
  {\newline}{}%
\theoremstyle{break}
\newtheorem{thm}{Theorem}
\newtheorem{rmk}[thm]{Remark}
\newtheorem{lem}[thm]{Lemma}
\newtheorem{pro}[thm]{Proposition}
\begin{document}

\begin{frontmatter}
  \title{Cache Miss Estimation for Non-Stationary Request Processes}
  \runtitle{Cache Miss Estimation}

  \begin{aug}
    \author{
      \fnms{Felipe} \snm{Olmos}\thanksref{t1,t2},
      \ead[label=e1]{luisfelipe.olmosmarchant@orange.com}
    }
    \author{
      \fnms{Carl} \snm{Graham}\thanksref{t2}
      \ead[label=e2]{carl.graham@polytechnique.edu}
    }
    \and
    \author{
      \fnms{Alain} \snm{Simonian}\thanksref{t1}
      \ead[label=e3]{alain.simonian@orange.com}
    }
    \runauthor{F.Olmos, C.Graham, A.Simonian}
    \address{
      Orange Labs\\
      Department OLN / NMP / TRM \\
      38 - 40 rue du G\'en\'eral Leclerc\\
      92794 Issy-Les-Moulineaux, France\\
      \printead{e1}
      \\
      \printead{e3}
    }
    \address{
      Centre de Math\'ematiques Appliqu\'ees\\
      \'Ecole Polytechnique, CNRS\\
      Universit\'e Paris Saclay\\
      Route de Saclay\\
      91128 Palaiseau, France\\
      \printead{e2}
    }
  \end{aug}

  \affiliation{Orange Labs\thanksmark{t1} and CMAP, \'Ecole Polytechnique\thanksmark{t2}}

  \begin{abstract}
    The goal of the paper is to evaluate the miss probability of a Least Recently
    Used (LRU) cache, when it is offered a non-stationary request process given
    by a Poisson cluster point process. First, we construct a probability space
    using Palm theory, describing how to consider a tagged document with respect
    to the rest of the request process. This framework allows us to derive a fundamental
    integral formula for the expected number of misses of the tagged
    document. Then, we consider the limit when the cache size and the arrival
    rate go to infinity in proportion, and use the integral formula to derive an
    asymptotic expansion of the miss probability in powers of the inverse of the
    cache size. This enables us to quantify and improve the accuracy of the
    so-called \emph{Che approximation}.
  \end{abstract}

  \begin{keyword}[class=MSC]
    \kwd{68B20}
    \kwd{60G55}
    \kwd{60K30}
  \end{keyword}

  \begin{keyword}
    \kwd{performance evaluation}
    \kwd{LRU cache policy}
    \kwd{Poisson cluster process}
    \kwd{Cox process}
    \kwd{scaling limit expansion}
    \kwd{Che approximation}
  \end{keyword}

\end{frontmatter}
\section{Introduction}
Since the early days of the Web, cache servers have been used to provide users
faster document retrieval while saving network resources. In recent years,
there has been a renewed interest in the study of these systems, since they are
the building bricks of \emph{Content Delivery Networks} (CDNs), a key
component of today's Internet. In fact, these systems handle nowadays around
$60\%$ of all video traffic, and it is predicted that this quantity will
increase to more than $70\%$ by 2019~\cite{cisco2015cisco}. Caches also play
an important role in the emergent \emph{Information Centric Networking} (ICN)
architecture, that incorporates them ubiquitously into the network in order to
increase its overall capacity~\cite{ahlgren2012survey}.

In order to improve network efficiency, cache servers are placed close to
the users, and store a subset of the \emph{catalog} of available documents. Upon
a user request for a document:
\begin{itemize}
  \item If the document is already stored in the cache, then the cache uploads it
    directly to the user. This event
    is called a \emph{cache hit}.

  \item Else, the request is forwarded to the repository server, which uploads a copy
    to the user, and possibly to the cache for future requests. This event is
    called a \emph{cache miss}.
\end{itemize}

Since the cost of storage is a constraint, each cache contains only a fraction
of the document catalog, and needs to eliminate some documents to free
space for new ones. Since the caches must decide to do so in real time, they use
simple distributed elimination algorithms, called \emph{cache eviction policies}.

Hereafter, we will focus our efforts on the \emph{Least Recently Used} (LRU) cache
eviction policy. 
To simplify the analysis, we will assume that all documents have the same size,
and therefore that the disk of the cache can be represented as a list of
documents of size $\CSz \geq 1$.
The LRU policy evicts content upon a user request as follows (see Fig.~\ref{fig:lru_cache}):
\begin{itemize}
  \item
    If the requested document is already stored in the cache, then it is moved to the front of the list,
    while all documents which were in front of it are shifted down by one slot.
  \item
    Else, a copy of the requested document is
    downloaded from the server and placed at the front of the list,
    while all documents which were already in the list are shifted down by one slot except the last one which is eliminated.
\end{itemize}
Intuitively, this simple policy should perform well, since highly requested documents
should stay near the front of the list, whereas unpopular ones should be
quickly eliminated. 

\begin{figure}[!thb]
  \centering
  \includegraphics[scale=0.33]{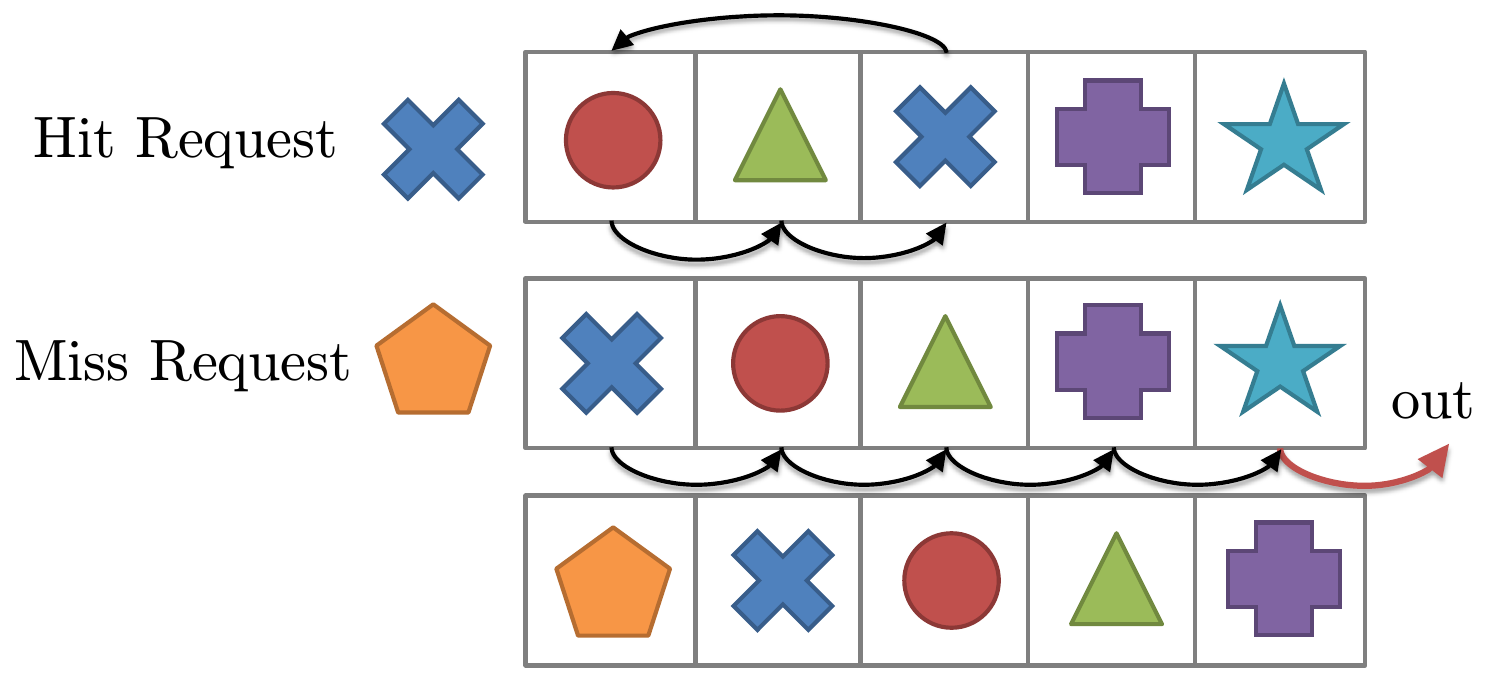}
  \caption{
    \emph{The LRU eviction policy handling a hit and a miss request on a cache
      of size $\CSz = 5$.}
  }
  \label{fig:lru_cache}
\end{figure}

Early theoretical studies on LRU caching performance further assumed that the
catalog is fixed and finite, and that documents there have each an intrinsic
probability to be requested independently, thus defining a popularity
distribution. The request process is then modeled as an \iid sequence, where at
each time step a document is requested according to its popularity. This
framework is commonly referred to as the \emph{Independent Reference Model}
(IRM) in the literature, see for instance~\cite{fricker2012versatile}.

While the IRM setting has been proved to be a good model for short time-scales,
it is not accurate for larger ones. In fact, other phenomena occurring within
longer time-scales must also be taken into account, notably the dynamic nature
of the catalog and of user preferences.

In order to capture these phenomena, a new model based on \emph{Poisson cluster
  point processes} has been independently proposed by Traverso et
al.~\cite{traverso2013temporal} and Olmos et al.~\cite{olmos2014catalog}. It
allows to address the catalog and preference dynamics, and thus to obtain more
accurate results in both large and small time scales. Its properties have 
received only heuristic analysis in these works.

The object of the present paper is to build a sound mathematical framework for
the analysis of this model, and to provide rigorous proofs for the estimation of
the hit probability, which corresponds to the asymptotic proportion of cache hits among all requests
when the number of these requests goes to infinity, or equivalently of the complementary miss probability. 

Before describing our main contributions, we briefly review the literature on
caching performance, mentioning only papers relevant to our present work;
see~\cite{fofack2012analysis} and the references therein for a more comprehensive
bibliography of the subject.
The modern treatment of the subject started with
Fill and Holst~\cite{fill1996distribution}, which introduced the embedding of
the request sequence into a marked Poisson process, in order to analyze the
related problem of the search cost for the \emph{Move-to-Front} list.
Independently, Che et al.~\cite{che2002hierarchical} also used marked Poisson
processes to model the requests. In their work, they express the hit
probability of a LRU cache in terms of a family of exit times of the documents
from the cache. In order to simplify the analysis, they approximated this
family by a single constant called the \emph{characteristic time}. This
heuristic, called the \emph{Che approximation} in the literature, proved to be
empirically accurate even outside of its original setting. The question of
quantifying the error incurred in the approximation has been partially answered
by Fricker et al.~\cite{fricker2012versatile}, where the authors provide a
justification for a Zipf popularity distribution when the cache size $\CSz$
grows to infinity and scales linearly with the catalog size. The error
incurred by the approximation is estimated for the exit times,
but not, however, for the hit probability.

In the present paper, we succeed in adapting the \emph{Che approximation} to the more complex setting of the cluster point process model.
The approximation accuracy has been considered first by Leonardi and
Torrisi~\cite{leonardi2015least}, which provide limit theorems for the exit time as $\CSz$ goes to infinity, 
as well as an upper bound of the
error on the hit probability. However, the latter bound depends on an
additional variable, of which the optimal value is not explicitly given in terms of
system parameters.

The contribution of our paper is threefold. Firstly, in Sections~\ref{sec:model}
and~\ref{sec:palm}, we use the Palm distribution for the system in order to
provide a probability space where an ``average document'' can be tagged and
analyzed independently from the rest. Secondly, in
Section~\ref{sec:hr_estimation},  we use the latter independence structure to
obtain an integral formula for the expected number of misses for a document,
generalizing the development in~\cite{olmos2014catalog}. Thirdly, in
Section~\ref{sec:asymptotic}, using scaling methods, we deduce from this formula an asymptotic
expansion for the average number of misses, showing that the error term 
in the \emph{Che approximation} is of order $O(1/\CSz)$. In contrast to the upper bound
provided in~\cite{leonardi2015least}, our error estimation depends simply on
system parameters and can be readily calculated. Section~\ref{sec:numerics} is
devoted to a numerical study validating the accuracy of the asymptotic
expansion. Section~\ref{sec:conclusion} contains some concluding remarks, and
Section~\ref{sec:proofs} contains all proofs.

\section{Document Request Model}
\label{sec:model}

Our request model consists in the following cluster point process on the real line $\RR$, illustrated
in Fig.~\ref{fig:request_process}. 

A ground process $\CatPP$, hereafter
called the \textbf{catalog arrival process}, gives the consecutive arrival times
of new documents to the catalog. We assume it to be a homogeneous Poisson
process with intensity $\CatArrRate>0$, and denote its generic arrival time by
$\CatArr$.

\begin{figure}[!thb]
  \centering
  \includegraphics[scale=0.4]{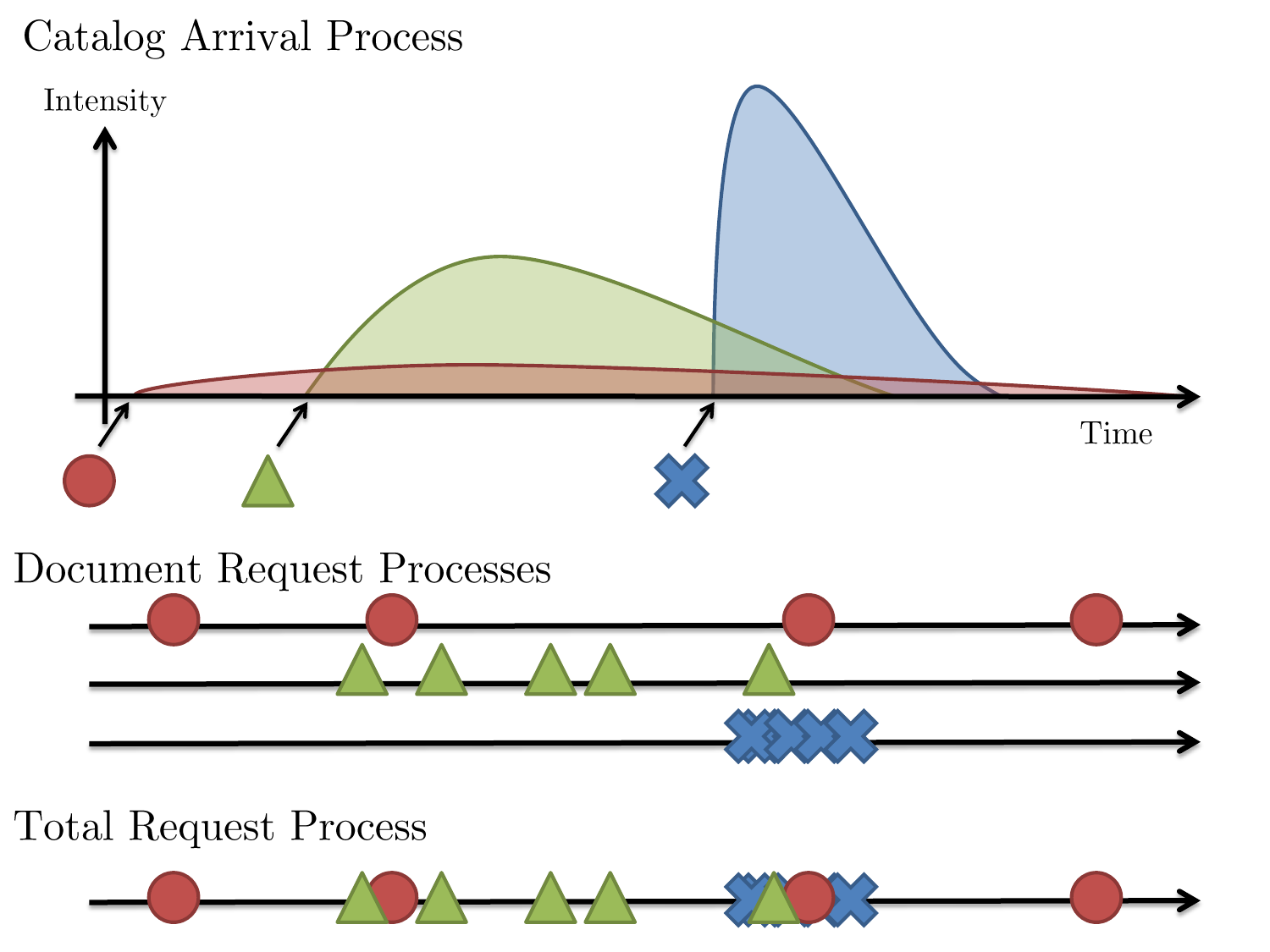}
  \caption{\textit{A sample of the document arrival and request processes.
      \textbf{Top}: Each catalog arrival triggers a function representing the request
      intensity for the corresponding document. \textbf{Bottom}: A sample of
      the document request processes. Their superposition generates the total
      request process.}}
  \label{fig:request_process}
\end{figure}

The cluster at an arrival time $\CatArr$ of $\CatPP$ is denoted by
$\DocReqPP_\CatArr$, and is an element of the space $\PPSpace{\RR}$ of point
processes on $\RR$.  It represents the \textbf{document request process} for
the document arriving to the catalog at that time~$a$. We assume that
$\DocReqPP_\CatArr$ is a Cox process directed by a stochastic intensity
function $\DocReqInt_\CatArr\ge0$ having the following properties.
\begin{itemize}
  \item Given $\CatPP$, the intensities $\DocReqInt_\CatArr$ for $\CatArr \in
    \CatPP$ are jointly independent.

  \item The intensities $\DocReqInt_\CatArr$ are \emph{causal}: each function
    $t \mapsto \DocReqInt_\CatArr(t)$ is zero for $t <\CatArr$.
    Requests
    for a document thus occur only after its arrival at the catalog.

  \item The distribution of $\DocReqInt_\CatArr$ is \emph{stationary}: for each
    arrival time $\CatArr \in \RR$, the processes $\DocReqInt_\CatArr(\cdot)$
    and $\DocReqInt_0(\cdot - \CatArr)$ have the same distribution.
\end{itemize}
These three conditions allow to sample the sequence
$(\DocReqInt_\CatArr)_{\CatArr \in \CatPP}$ using independent samples of a
\textbf{canonical intensity function} $\DocReqInt$ with support in $[0,
\infty)$, adequately shifted to every arrival time $\CatArr$.

For a document arriving at time $\CatArr$, we denote by $\DocReqMeas_\CatArr$
both the \textbf{mean function} associated to the request intensity
$\DocReqInt_\CatArr$ and the \textbf{average number of requests} (with abuse of
notation for conciseness)
\begin{equation*}
  \DocReqMeanFn_\CatArr(t)
  \defEqual
  \int_\CatArr^t \DocReqInt_\CatArr(u) \, \dd u\,,
  \quad
  t \geq \CatArr\,,
  \qquad
  \AveNReqs_\CatArr \defEqual \DocReqMeas_\CatArr(\infty)\,.
\end{equation*}
We assume that $\AveNReqs_\CatArr < \infty$ almost surely, and denote by
$\DocReqMeanComp_\CatArr$ the \textbf{complementary mean function}
\[
  \DocReqMeanComp_\CatArr(t)
  \defEqual \AveNReqs_\CatArr - \DocReqMeanFn_\CatArr(t)
  =
  \int_t^\infty \DocReqInt_\CatArr(u) \, \dd u\,, \quad t \geq \CatArr\,.
\]

When referring to the canonical document, which corresponds to an arrival at
time zero, we remove the time index $\CatArr$; for instance we write
$\AveNReqs$ and $\DocReqMeanComp(t)$. The superposition of all processes
$\DocReqPP_\CatArr$ for $\CatArr \in \CatPP$ given by
\[
  \TotReqPP \defEqual \sum_{\CatArr \in \CatPP} \DocReqPP_\CatArr
\]
constitutes the \textbf{total request process} for all documents. We assume
that
\begin{equation}
  \label{eq:finiteness_condition}
 \int_{-\infty}^t
  \E{1-e^{-(\DocReqMeanFn_\CatArr(t) - \DocReqMeanFn_\CatArr(s))}} \, \dd \CatArr
  <
  \infty
\end{equation}
for all times $s\leq t$. This is a necessary and sufficient condition for the process $\TotReqPP$ 
to be locally finite almost surely, see~\cite[Theorem
6.3.III]{daley2003introduction}.

\section{Tagging a Document via Palm Theory}
\label{sec:palm}

The key of our analysis is to \emph{tag} one document of the system and treat
the remaining process as an external \emph{environment}. To do this, we
follow~\cite[p.279]{daley2008introduction}. Let $\Palm{u, \nu}$ be the local
Palm distribution at point $(u, \nu)$ in $\RR \times \PPSpace{\RR}$ for the point process
\[
  \MarkCatPP \defEqual \sum_{\CatArr \in \CatPP} \delta_{\CatArr,\DocReqPP_\CatArr}
\]
constituted by the ground process $\CatPP$ marked with the document request
processes, which constitutes a Poisson point process on $\RR
\times\PPSpace{\RR}$. Define the mark-averaged Palm distribution $\APalm{u}$
on $\PPSpace{\RR}$ by
\[
  \APalm{u}(\cdot)
  \defEqual
  \E{\Palm{u, \DocReqPP_u}(\cdot)}.
\]
Under this distribution $\APalm{u}$, the process has the structure given by the
following proposition, illustrated by Fig.~\ref{fig:palm_schema}.

\begin{pro}[Palm Decomposition for Tagged Document]
  \label{pro:palm_distribution}
  Under the distribution $\APalm{u}$, the process $\tilde{\TotReqPP}$ has almost
  surely a point at time $u$. Furthermore:
  \begin{itemize}
    \item The distribution of the mark $\DocReqPP_u$ is the same as the original one.
    \item The distribution of the remaining process $\tilde{\TotReqPP}
      \setminus \delta_{u, \DocReqPP_u}$ is the same than that of the original
      process $\tilde{\TotReqPP}$.
    \item The mark $\DocReqPP_u$ and the process $\tilde{\TotReqPP} \setminus \delta_{u, \DocReqPP_u}$ are independent.
  \end{itemize}
\end{pro}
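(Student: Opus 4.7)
The plan is to recognize $\MarkCatPP$ as a Poisson point process on $\RR \times \PPSpace{\RR}$ and then apply the Slivnyak--Mecke characterization of Palm distributions for Poisson processes. First I would verify by the marking theorem that $\MarkCatPP$ is itself Poisson: the ground process $\CatPP$ is homogeneous Poisson with intensity $\CatArrRate$, and by the hypotheses of Section~\ref{sec:model} the marks $\DocReqPP_\CatArr$ are, conditionally on $\CatPP$, mutually independent with law $K_\CatArr$ equal to the canonical distribution of $\DocReqPP$ translated by $\CatArr$. The marking theorem then yields that $\MarkCatPP$ is Poisson on $\RR \times \PPSpace{\RR}$ with intensity measure $\CatArrRate\, du \otimes K_u(d\nu)$, which in particular guarantees that the local Palm kernel $\Palm{u,\nu}$ is well defined.

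Next I would invoke Slivnyak's theorem: for a Poisson process with $\sigma$-finite intensity, the Palm distribution at a point $x$ coincides with the law of $\delta_x$ superimposed with an independent copy of the original process. Applied to $\MarkCatPP$, this says that under $\Palm{u,\nu}$ the marked process is distributed as $\delta_{u,\nu} + \MarkCatPP'$, where $\MarkCatPP'$ is independent of $(u,\nu)$ and has the law of $\MarkCatPP$. Averaging this decomposition over the mark $\nu$ against $K_u$ then realizes $\APalm{u}$ as the law of $\delta_{u,M} + \MarkCatPP'$, with $M \sim K_u$ drawn independently of $\MarkCatPP'$.

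The three claims of the proposition are then immediate: $\APalm{u}$-almost surely there is a point at time $u$ by construction; its mark $\DocReqPP_u = M$ has the canonical law $K_u$; the remaining process $\tilde{\TotReqPP} \setminus \delta_{u,\DocReqPP_u} = \MarkCatPP'$ has the distribution of the original marked process; and mark and remainder are independent because $M \perp \MarkCatPP'$. I expect the only delicate point to be the rigorous formulation of Slivnyak's theorem when the marks live in the infinite-dimensional space $\PPSpace{\RR}$; for this I would rely on the general Palm calculus of Daley and Vere-Jones~\cite{daley2008introduction}, with the local-finiteness condition~\eqref{eq:finiteness_condition} ensuring that the relevant measures are $\sigma$-finite and that the Slivnyak decomposition of $\MarkCatPP$ into $\delta_{u,M}$ and an independent copy of itself is well defined.
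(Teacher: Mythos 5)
Your proposal is correct and follows essentially the same route as the paper: both identify $\MarkCatPP$ as a Poisson process on $\RR \times \PPSpace{\RR}$, apply the Slivnyak--Mecke theorem to get the Palm distribution $\Palm{u,\nu}$ as the original process with an added atom, and then average over the mark to obtain $\APalm{u}$. The only cosmetic difference is that the paper carries out the averaging at the level of Laplace functionals (whose factorization yields the independence claim), whereas you state the Slivnyak decomposition directly in distributional form; these are equivalent formulations of the same argument.
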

We refer to Section~\ref{sec:proof_palm_distribution} for the proof of this proposition.
\begin{figure}[!htb]
  \centering
  \includegraphics[scale=0.4]{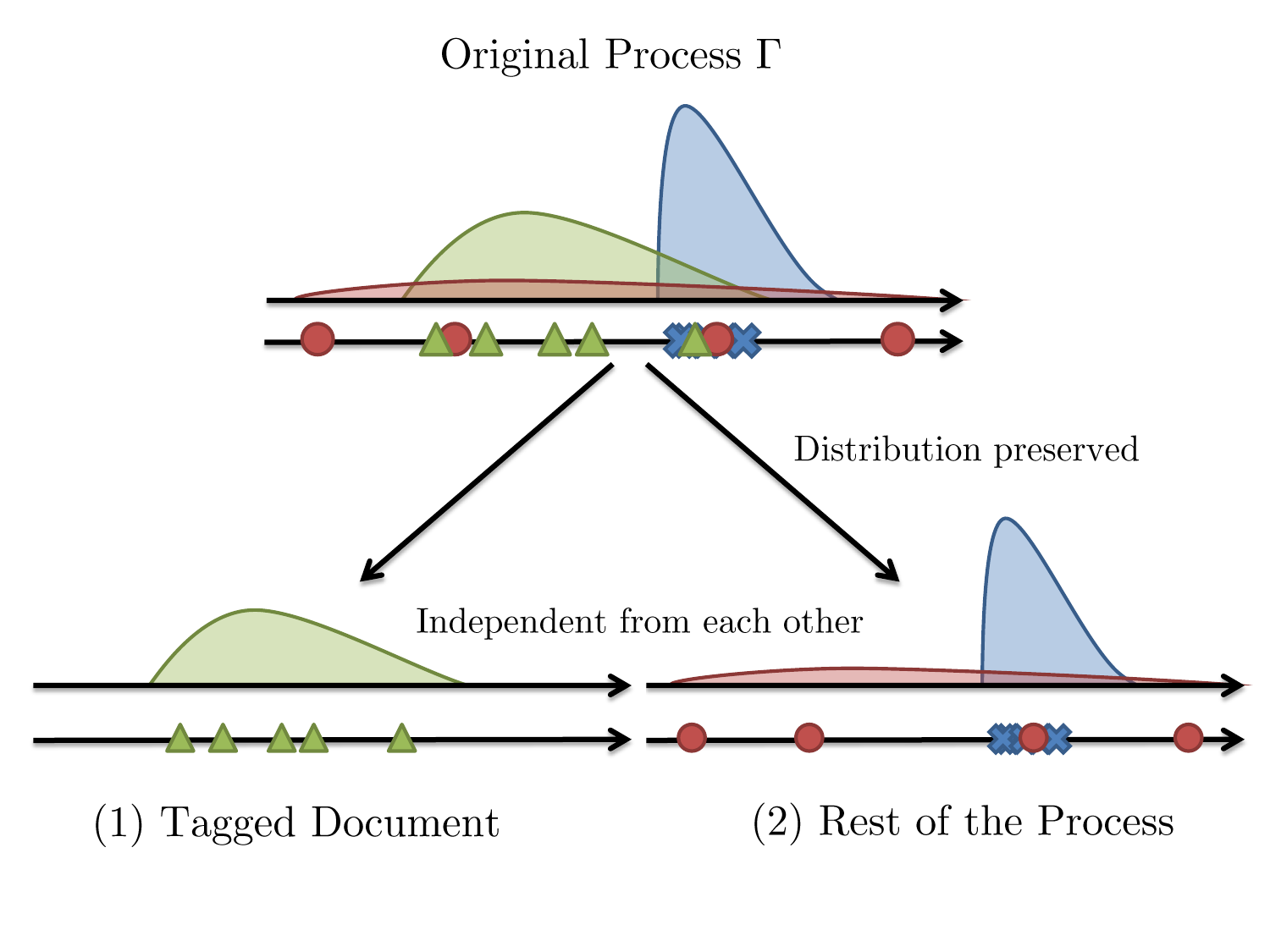}
  \caption{\textit{Illustration of request process $\DocReqPP$ under the averaged
      Palm distribution. The original process is decomposed into: (1) the tagged
      document and (2) the rest of the process. These are mutually independent, and
      the rest of the process has the same distribution as the original process.}}
  \label{fig:palm_schema}
\end{figure}

Proposition~\ref{pro:palm_distribution} allows us to consider a probability
space for which there is a document arrival at time $\CatArr = 0$, almost
surely. We call this document the \textbf{tagged} document, and the
complementary process \textbf{the rest}.

In the next section, we shall see that for the LRU caching discipline, the
independence of the tagged document from the rest allows us to derive a general
integral formula for the miss probability.
\section{Fundamental Integral Formula}
\label{sec:hr_estimation}
As stated in the previous section, we will consider a tagged document at time
zero, so that its associated distribution is the canonical one. For a LRU cache
with size $\CSz$, let $\NReqs$ and $\NMiss_\CSz$ be the random number of
requests and number of misses for the tagged document, respectively. The total
miss probability is defined by
\[
  \MissProba{\CSz} \defEqual \frac{\E{\NMiss_\CSz}}{\E{\NReqs}}\,,
\]
which is also the average per-document miss probability $\NMiss_\CSz/\NReqs$ under the
size biased distribution of $\NReqs$. 
The mixed Poisson random variable  $\NReqs$ with random mean $\AveNReqs$ has expectation
\[
  \E{\NReqs} = \E{\E{\NReqs}[\AveNReqs]} = \E{\AveNReqs},
\]
and it remains to study $\NMiss_\CSz$.

Let $(\Req_\iReq)_{\iReq=1}^\NReqs$ be the sequence of request times for the
tagged document, with the understanding that it is the empty set if $\NReqs =
0$. The first request being always a miss, the number of misses can be written
as
\begin{equation}
  \label{eq:nmiss}
  \NMiss_\CSz
  =
  \II{\NReqs \geq 1} + \II{\NReqs \geq 2}
  \sum_{\iReq = 2}^{\NReqs}
  \II{\text{Request at } \Req_\iReq \text { is a miss}
  }.
\end{equation}
Under the LRU policy, a document requested at time $s$ will be erased from the
cache at the first time, after the last request for this document,
that $\CSz$ distinct other documents have been requested.

For each $s$ in $\RR$, let us define the process $\NDifDocs^s = (\NDifDocs_t^s)_{t \geq s}$ which counts the number of \textbf{distinct} documents in the \textbf{rest} of
the process which are requested on the time interval $[s,t]$, and its \textbf{exit time} $\ExitTime{\CSz}^s$
to level $\CSz$.
Hence, $\ExitTime{\CSz}^s$  is the time that a document
requested at time $s$ spends in the cache before being evicted.
Denoting by
$F^s(\DocReqPP_\CatArr)$ the first arrival time of $\DocReqPP_\CatArr$ in $[s,
\infty)$, the process $\NDifDocs^s = (\NDifDocs_t^s)_{t \geq s}$ and exit time $\ExitTime{\CSz}^s$ can be
expressed as
\begin{equation}
  \label{eq:def_ndifdocs}
  \left\{
    \begin{aligned}
      \NDifDocs^s_t
      &\defEqual
      \#\{
      (\CatArr,\DocReqPP_\CatArr) \text{ in } \tilde{\TotReqPP}
      \setminus \delta_{0, \DocReqPP_0}: F^s(\DocReqPP_\CatArr) \leq t
      \}\,,
      \\
      \ExitTime{\CSz}^s &\defEqual \inf\{ t \geq s : \NDifDocs^s_t = \CSz \}\,.
    \end{aligned}
  \right.
\end{equation}
These definitions allow us to express the miss events as
\[
  \lbrace \text{Request at } \Req_{\iReq} \text{ is a miss}\rbrace
  =
  \left\lbrace X_{\Req_{\iReq}}^{\Req_{\iReq-1}} \geq \CSz \right\rbrace
  =
  \left\lbrace \Req_{\iReq} > \ExitTime{\CSz}^{\Req_{\iReq - 1}}\right\rbrace\,,
  \quad
  \iReq \geq 2\,,
\]
since such a miss occurs if and only if at least $\CSz$ distinct other
documents have been requested in the interval $[\Req_{\iReq-1} ,
\Req_{\iReq}]$.
Hence \eqref{eq:nmiss} can be written as
\begin{equation}
  \label{eq:nmiss_cech}
  \NMiss_\CSz
  =
  \II{\NReqs \geq 1} +
  \II{\NReqs \geq 2}
  \sum_{\iReq = 2}^{\NReqs}
  \II{\Req_\iReq > \ExitTime{\CSz}^{\Req_{\iReq-1}}
  }\,.
\end{equation}

To proceed further, we study the consequences of the
structure of the cluster point process on the
structure of the families $\NDifDocs^s$ and $\ExitTime{\CSz}^s$.

\begin{pro}[Characterization of $\NDifDocs^s$ and $\ExitTime{\CSz}^s$]
  \label{pro:characterization_ndifdocs}
  Let $s$ be in $\RR$.
  The process $\NDifDocs^s = (\NDifDocs_t^s)_{t \geq s}$ defined by~\eqref{eq:def_ndifdocs}
  is an inhomogeneous Poisson process with intensity function
  \begin{equation}
    \label{eq:ndifdocsmean}
    \NDifDocsMean^s(t)
    \defEqual
    \E{\NDifDocs^s_t}
    = \CatArrRate \, \int_{-\infty}^t
    \E{ 1 - e^{-(\DocReqMeanFn_\CatArr(t) - \DocReqMeanFn_\CatArr(s))}}
    \dd\CatArr
    \, ,
    \quad
    t \geq s\,
    .
  \end{equation}
  In particular, $\ExitTime{\CSz}^s - s \distEqual \ExitTime{\CSz}$, where
  $\ExitTime{\CSz} = \ExitTime{\CSz}^0$ is the exit time of a document
  requested at time zero.
\end{pro}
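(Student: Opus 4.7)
The plan is to reduce everything to standard Poisson calculations on the marked catalog process. By Proposition~\ref{pro:palm_distribution}, the rest $\tilde{\TotReqPP}\setminus\delta_{0,\DocReqPP_0}$ is independent of the tagged document and distributed as the original marked Poisson process $\MarkCatPP$ on $\RR\times\PPSpace{\RR}$, so I may replace it by $\MarkCatPP$ in the definition of $\NDifDocs^s$. With this identification, $\NDifDocs^s_t$ counts the catalog arrivals $\CatArr\in\CatPP$ whose first subsequent request in $[s,\infty)$ falls before $t$; equivalently, it is the counting process of the image of $\MarkCatPP$ under the map $\Psi^s:(\CatArr,\DocReqPP_\CatArr) \mapsto F^s(\DocReqPP_\CatArr) \in [s,\infty]$, restricted to $[s,\infty)$.

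First, I would show that this image is an inhomogeneous Poisson process on $[s,\infty)$. Since the marks $\DocReqPP_\CatArr$ are conditionally independent given $\CatPP$, this follows from the Marking/Mapping theorem for Poisson processes; alternatively, one can compute the Laplace functional directly: for any nonnegative measurable $f$ on $[s,\infty)$, extended by $f(\infty):=0$,
\[
	\E{\exp\!\left(-\sum_{\CatArr\in\CatPP} f(F^s(\DocReqPP_\CatArr))\right)}
	=
	\exp\!\left(-\CatArrRate \int_\RR \E{1-e^{-f(F^s(\DocReqPP_\CatArr))}}\,\dd\CatArr\right),
\]
identifying the image as a Poisson process with some intensity measure $\mu^s$. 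To compute $\NDifDocsMean^s(t)=\mu^s([s,t])$, I would exploit two features: causality of $\DocReqInt_\CatArr$ forces $F^s(\DocReqPP_\CatArr)=\infty$ whenever $\CatArr>t$, restricting the integral to $\CatArr\leq t$; and conditioning on $\DocReqInt_\CatArr$ and using the void probability of the Cox process $\DocReqPP_\CatArr$ on $[s,t]$ gives $\PP{F^s(\DocReqPP_\CatArr)>t\,|\,\DocReqInt_\CatArr} = \exp\bigl(-(\DocReqMeanFn_\CatArr(t)-\DocReqMeanFn_\CatArr(s))\bigr)$. This yields formula~\eqref{eq:ndifdocsmean}, and the assumption~\eqref{eq:finiteness_condition} ensures $\NDifDocsMean^s(t)<\infty$, so $\NDifDocs^s$ is genuinely a locally finite point process.

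Finally, the distributional equality $\ExitTime{\CSz}^s - s \distEqual \ExitTime{\CSz}$ reduces to the stronger identity $(\NDifDocs^s_{s+u})_{u\geq 0} \distEqual (\NDifDocs^0_u)_{u\geq 0}$, since the exit time is a measurable functional of the counting process. Both sides being inhomogeneous Poisson processes, it suffices to compare mean functions: the change of variables $\CatArr\mapsto\CatArr+s$ in~\eqref{eq:ndifdocsmean}, together with the assumed shift-stationarity of the law of $\DocReqInt$ (and hence of the mean functions $\DocReqMeanFn$), reduces $\NDifDocsMean^s(s+u)$ to $\NDifDocsMean^0(u)$. The main subtlety I anticipate is rigorously handling the Laplace functional when $F^s$ may take the value $\infty$ and $\CatPP$ extends over the whole real line; this is exactly what is controlled by~\eqref{eq:finiteness_condition}, which guarantees that only finitely many documents contribute to any bounded interval.
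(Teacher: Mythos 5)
Your proposal is correct and follows essentially the same route as the paper: both arguments exploit that the rest of the process is a Poisson process on $\RR \times \PPSpace{\RR}$, deduce that its image under $(\CatArr,\DocReqPP_\CatArr) \mapsto F^s(\DocReqPP_\CatArr)$ is an inhomogeneous Poisson process (the paper via independent increments from disjoint preimages, you via the mapping theorem/Laplace functional, which is the same underlying fact), and compute the mean function through the Cox void probability $\E{e^{-(\DocReqMeanFn_\CatArr(t)-\DocReqMeanFn_\CatArr(s))}}$ under condition~\eqref{eq:finiteness_condition}. Your explicit change-of-variables and stationarity argument for $\ExitTime{\CSz}^s - s \distEqual \ExitTime{\CSz}$ spells out a step the paper leaves implicit, and is correct.
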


We refer to Section~\ref{sec:proof_characterization_ndifdocs} for the proof.

Equation~\eqref{eq:nmiss_cech},
Proposition~\ref{pro:characterization_ndifdocs},
and the independence between the tagged
document and the rest of the process now yield an integral formula for $\E{\NMiss_\CSz}$.

\begin{thm}[Integral Formula for Expected Misses]
\label{pro:expected_number_of_misses}
  
  The expected number of misses is given by
  \begin{equation}
    \label{eq:ave_nmiss_exit_time}
    \E{\NMiss_\CSz}
    =
    \E{\AveNMiss(\ExitTime{\CSz})}
  \end{equation}
  where $\ExitTime{\CSz} = \ExitTime{\CSz}^0$ denotes the exit time for a document requested at
  time zero, see \eqref{eq:def_ndifdocs}, and the function $\AveNMiss$ is defined 
  using the notation in Section~\ref{sec:model}  by
  \begin{equation}
    \label{eq:ave_nmiss_ttl}
    \AveNMiss(t)
    =
    \E{
      \int_0^{\infty} \!\!
      \DocReqInt(u)
      e^{-(\DocReqMeanFn(u+t) - \DocReqMeanFn(u))}
      \, \dd u
    },
    \quad
    t \geq 0\,.
  \end{equation}
  Moreover, $\lim_{t\to\infty} \downarrow m(t) = \MinAveNMiss$, where $\MinAveNMiss = \E{1 - e^{-\AveNReqs}}$
  with $\Lambda=\Lambda(\infty)$.
  \end{thm}

The proof is postponed to Section~\ref{sec:proof_expected_number_of_misses}.
It uses the following result of independent interest.

\begin{pro}[Functionals of Holding Times]
  \label{pro:holding_times_functionals}
  Let $\DocReqPP$ be an inhomogeneous Poisson process on $[0, \infty)$ with
  \emph{deterministic} intensity function $\DocReqInt$. Let the mean
  function $\DocReqMeanFn$ satisfy $\DocReqMeanFn(\infty) < \infty$, so that
  $\DocReqPP$ has a finite random number $\NReqs$ of points $(\Req_\iReq)_{\iReq = 1}^\NReqs$.
  Then, for any $F:\RR^+ \to \RR$,
  \begin{align*}
    &\E{
      \II{\NReqs \geq 2}
      \sum_{\iReq=2}^\NReqs F(\Req_\iReq - \Req_{\iReq -1})
    }
    \\
    &\quad = \int_0^\infty \dd w\,
    F(w) \int_0^\infty \dd u\,
    \DocReqInt(u) \DocReqInt(u+w)
    e^{-(\DocReqMeanFn(u+w) - \DocReqMeanFn(u))}
    \,\,.
  \end{align*}
\end{pro}

We refer to Section~\ref{sec:proof_holding_times_functionals} for the proof of this proposition.

The above analysis would identically apply if the random variable $\ExitTime{\CSz}$
were deterministic and equal to some positive constant $t$. This would
correspond to the cache discipline known as \emph{Time to Live (TTL)}, where
the cache evicts a document after a fixed amount of time $t$. Therefore,
$\AveNMiss(t)$ is simply the average number of misses for a TTL cache of
eviction time $t$. We can thus regard the number of misses in a LRU cache as a
time randomization of the misses in a TTL cache.

Indeed, the integral formula~\eqref{eq:ave_nmiss_ttl} in
Theorem~\ref{pro:expected_number_of_misses} can by rewritten using integration by parts as
\[
  m(t)
  =
  \E{
    \int_0^\infty
    \DocReqInt(u) \, e^{-(\DocReqMeanFn(u) - \DocReqMeanFn(u-t))} \, \dd u
  }
\]
which can be informally interpreted as follows: The exponential term
\[
  e^{-(\DocReqMeanFn(u) - \DocReqMeanFn(u-t))}
\]
is simply the conditional probability $\PP{\DocReqPP[u-t, u] =
  0}[][][\DocReqInt]$. Thus a request at time $u$ will contribute to the
intensity of the miss process if there were no requests in the interval $[u-t,
u]$, which is exactly a miss event in a $t$-TTL cache. This relationship
between the miss probabilities of TTL and LRU caches has been already noted by
Fofack et al.~\cite{fofack2014approximate}.

\section{Asymptotic Expansion}
\label{sec:asymptotic}

It holds that $\lim_{t\to\infty} \downarrow m(t) = \MinAveNMiss$, see
Theorem~\ref{pro:expected_number_of_misses}. 
Moreover, 
Proposition~\ref{pro:characterization_ndifdocs} yields that the exit time
$\ExitTime{\CSz}$ increases to infinity with $\CSz$. Hence,
\eqref{eq:ave_nmiss_exit_time} and dominated convergence yield that
\[
  \lim_{\CSz\to\infty}\E{\NMiss_\CSz} = \MinAveNMiss\,.
\]
This formula is not informative, since it basically tells us that the first request
for a document is the unique miss for infinite capacity.

A more interesting way to derive asymptotics for $\E{\NMiss_\CSz}$ is to scale some
system parameters with respect to $\CSz$. An intuitively good choice is to scale the catalog arrival rate
$\CatArrRate$ proportionally to the cache size $\CSz$.
In the following, with help of the results of the previous sections, we shall provide an asymptotic
expansion for $\E{\NMiss_\CSz}$ as $\CSz$ grows large in this scaling.

The canonical exit time $\ExitTime{\CSz}$ is the first passage time to
level $\CSz$ of an inhomogeneous Poisson process with mean function
$\NDifDocsMean = \NDifDocsMean^0$, see Proposition~\ref{pro:characterization_ndifdocs}. 
To pursue the analysis, we first prove a key
relation between $\NDifDocsMean$ and $\AveNMiss$.

\begin{pro}[Relation between $\NDifDocsMean$ and $\AveNMiss$]
  \label{pro:key_identity}
  The functions $\NDifDocsMean$ and $\AveNMiss$
  in \eqref{eq:ndifdocsmean} and \eqref{eq:ave_nmiss_ttl}
  satisfy $\NDifDocsMean'(t) = \CatArrRate \,\AveNMiss(t)$,
  and hence
  \[
    \NDifDocsMean(t) = \CatArrRate\, \IAveNMiss(t) \,,
    \;\;
    \text{where}
    \;\;
    \IAveNMiss(t) \defEqual \int_0^t \AveNMiss(s) \, \dd s\,,
    \qquad
    t \geq 0\,.
  \]
\end{pro}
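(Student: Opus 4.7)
The plan is to reduce $\NDifDocsMean(t)$ to a single integral over $[0,\infty)$ by exploiting the stationarity assumption, then differentiate under the integral sign to recover $\CatArrRate\,\AveNMiss(t)$, and finally integrate back to obtain $\NDifDocsMean=\CatArrRate\,\IAveNMiss$.

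First I would rewrite the defining expression \eqref{eq:ndifdocsmean} at $s=0$. By the stationarity of the canonical intensity, $\DocReqInt_\CatArr(\cdot)\distEqual \DocReqInt(\cdot-\CatArr)$, so $\DocReqMeanFn_\CatArr(t)\distEqual \DocReqMeanFn(t-\CatArr)$ with the convention $\DocReqMeanFn(v)=0$ for $v\le 0$ (causality). Performing the change of variables $u=t-\CatArr$ in the integral from $-\infty$ to $t$ then gives, after relabeling,
\[
\NDifDocsMean(t)=\CatArrRate\int_0^\infty \E{1-e^{-(\DocReqMeanFn(u)-\DocReqMeanFn(u-t))}}\,\dd u,
\]
where the contributions from $\CatArr\in(0,t]$ (giving $\DocReqMeanFn_\CatArr(0)=0$) and from $\CatArr\le 0$ merge cleanly into a single integral thanks to the convention $\DocReqMeanFn(u-t)=0$ for $u\le t$.

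Second, I would differentiate this expression in $t$ under the integral. Since $\partial_t\DocReqMeanFn(u-t)=-\DocReqInt(u-t)$ (again zero for $u\le t$), the pointwise $t$-derivative of the integrand is $\DocReqInt(u-t)\,e^{-(\DocReqMeanFn(u)-\DocReqMeanFn(u-t))}$, supported on $u\ge t$. Interchange of derivative and expectation/integral is justified by the domination $\DocReqInt(u-t)\le \DocReqInt(u-t)$ together with \eqref{eq:finiteness_condition}, which ensures the required local integrability. A final change of variables $w=u-t$ yields
\[
\NDifDocsMean'(t)=\CatArrRate\int_0^\infty \E{\DocReqInt(w)\,e^{-(\DocReqMeanFn(w+t)-\DocReqMeanFn(w))}}\,\dd w=\CatArrRate\,\AveNMiss(t),
\]
by Fubini and the definition \eqref{eq:ave_nmiss_ttl}.

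Third, since $\NDifDocsMean(0)=0$ (the integrand above vanishes at $t=0$), integrating from $0$ to $t$ gives $\NDifDocsMean(t)=\CatArrRate\,\IAveNMiss(t)$. The one step that needs genuine care is the interchange of differentiation and the double integral/expectation in the second step; once the single-integral representation in the first step is obtained and the finiteness hypothesis is invoked, this is a routine dominated-convergence argument, and the rest reduces to change of variables.
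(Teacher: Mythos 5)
Your argument is correct and proves the same identity, but it is organized differently from the paper's proof, and in a way that is arguably cleaner. The paper splits $\NDifDocsMean(t)$ into two pieces, $I_1(t)$ for arrivals $\CatArr\le 0$ and $I_2(t)$ for arrivals $\CatArr\in(0,t]$, differentiates each separately, and then needs an integration by parts in the $\CatArr$ variable to turn $\int_0^\infty\E{\DocReqInt(t+\CatArr)\,e^{-(\DocReqMeanFn(t+\CatArr)-\DocReqMeanFn(\CatArr))}}\dd\CatArr$ into $\AveNMiss(t)$ plus a boundary term $\E{e^{-\DocReqMeanFn(t)}-1}$, which must then cancel against $I_2'(t)=\E{1-e^{-\DocReqMeanFn(t)}}$. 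By adopting the convention $\DocReqMeanFn(v)=0$ for $v\le0$, you merge the two pieces into the single integral $\CatArrRate\int_0^\infty\E{1-e^{-(\DocReqMeanFn(u)-\DocReqMeanFn(u-t))}}\dd u$, and a direct $t$-differentiation followed by the shift $w=u-t$ lands exactly on \eqref{eq:ave_nmiss_ttl} with no boundary terms and no cancellation; your representation is precisely the ``TTL'' form of $\AveNMiss$ that the paper only exhibits after Proposition~\ref{pro:expected_number_of_misses}. The one blemish is your justification of interchanging $\partial_t$ with the integral and expectation: the stated domination ``$\DocReqInt(u-t)\le\DocReqInt(u-t)$'' is vacuous, and \eqref{eq:finiteness_condition} does not by itself control the natural dominating mass $\int_0^\infty\E{\DocReqInt(u-t)}\dd u=\E{\AveNReqs}$, so you should either assume $\E{\AveNReqs}<\infty$ (as the paper implicitly does elsewhere) or, better, bypass differentiation entirely: writing $1-e^{-(\DocReqMeanFn(u)-\DocReqMeanFn(u-t))}=\int_0^t\DocReqInt(u-s)\,e^{-(\DocReqMeanFn(u)-\DocReqMeanFn(u-s))}\dd s$ by the fundamental theorem of calculus and applying Tonelli to the nonnegative triple integral yields $\NDifDocsMean(t)=\CatArrRate\,\IAveNMiss(t)$ directly, with no domination needed. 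Since the paper's own proof differentiates under the integral sign with no more justification than yours, this is a presentational remark rather than a gap.
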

We refer to Section~\ref{sec:proof_key_identity} for the proof.

Proposition~\ref{pro:key_identity} implies that 
$\NDifDocsMean(t) = y \Leftrightarrow \IAveNMiss(t) = y/ \CatArrRate$ 
and thus that
\begin{equation}
  \label{eq:key_identity_inverse}
  \NDifDocsMean^{-1}(y) = \IAveNMiss^{-1} \left( \frac{y}{\CatArrRate} \right),
  \quad y \geq 0\,,
\end{equation}
(for definiteness, we consider left-continuous inverses).
Moreover, the exit time $\ExitTime{\CSz}$ is the first passage time to level $\CSz$ of
an inhomogeneous Poisson process with mean function $\NDifDocsMean$
(see Proposition~\ref{pro:characterization_ndifdocs}), and  can be expressed as
\begin{equation}
  \label{eq:exit_time_change}
  \ExitTime{\CSz} = \NDifDocsMean^{-1}(\FPTPoiss{\CSz})
\end{equation}
where $\FPTPoiss{\CSz}$ is the first passage time to level $\CSz$ of an unit Poisson process and has a Gamma$(\CSz, 1)$
distribution. From Theorem~\ref{pro:expected_number_of_misses} and~\eqref{eq:exit_time_change},
we derive that
$
\E{\NMiss_\CSz}
=
\E{\AveNMiss(\ExitTime{\CSz})}
=
\E{\AveNMiss(\NDifDocsMean^{-1}(\FPTPoiss{\CSz}))}
$,
and \eqref{eq:key_identity_inverse} eventually yields that
\begin{equation}
  \label{eq:ave_nmiss_change_var2}
  \E{\NMiss_\CSz}
  =
  \E{
    \AveNMiss\left(\IAveNMiss^{-1}\left(\frac{\FPTPoiss{\CSz}}{\CatArrRate} \right)\right)
  }.
\end{equation}

Now, the strong law of large numbers yields that $\lim_{\CSz \to \infty} \FPTPoiss{\CSz}/\CSz = 1$
almost surely, and thus \eqref{eq:ave_nmiss_change_var2} strongly suggests to consider the scaling
\begin{equation}
  \label{eq:scaling}
  \CSz = \CatArrRate \AveSojourn
  \;\;\text{for some}\;\; \AveSojourn>0\,.
\end{equation}

This scaling is quite natural, since Little's law (\cite[Section 3.1.2]{baccelli2013elements}) applied to the cache system yields that
$\CSz = \CatArrRate \, \E{T^{\text{in}}_\CSz}$, where
\[
  T^{\text{in}}_\CSz = \int_0^\infty \II{ \text{Object is in the cache at } t} \, \dd t
\]
is the sojourn time of an object in the cache. Note that we do consider
the objects without any requests as entering the system, but
we set their sojourn time to $T_{\CSz}^{\text{in}} = 0$. 

As a consequence, the
asymptotic analysis under the scaling~\eqref{eq:scaling} amounts to fixing the
average sojourn time $\AveSojourn = \E{T_{\CSz}^{\text{in}}} = \CSz/\CatArrRate$
and the distribution of the canonical intensity function $\DocReqInt$ while
letting $\CSz$ grow to infinity.

Under the scaling~\eqref{eq:scaling}, eq.~\eqref{eq:ave_nmiss_change_var2} and
$\lim_{\CSz \to \infty} \FPTPoiss{\CSz}/\CSz = 1$ a.s. imply using dominated convergence that
\[
  \lim_{\CSz \to \infty} \E{\NMiss_\CSz}
  =
  \AveNMiss(\ChT{\AveSojourn})\,,
  \quad
  \ChT{\AveSojourn}\defEqual \IAveNMiss^{-1}(\AveSojourn)\,.
\]
In the following, the quantity $\ChT{\AveSojourn}$ will be called the
\textbf{characteristic time}. The asymptotics of $\E{\NMiss_\CSz}$ will be
expressed in terms of $\ChT{\AveSojourn}$. In this aim, we first recall two basic results
regarding the $\GammaD{\CSz}{1}$ distribution.
\begin{lem}[Classical Bounds on Gamma Laws]
  \label{lem:gamma}
  Let $\FPTPoiss{\CSz}$ follow a $\GammaD{\CSz}{1}$ distribution, and
  $X_\CSz \defEqual \FPTPoiss{\CSz}/\CSz$.
  Then:
  \begin{enumerate}[label=\textnormal{(\roman*)}, leftmargin=*, widest=ii]
    \item \label{lem:gamma-i}
    For any $\CSz > 1$ and $\eta > 0$,
      \[
        \PP{|X_\CSz - 1| \geq \eta} \leq 2e^{-\CSz \cdot \varphi(1 + \eta)}\,,
      \]
      where $\varphi(x) \defEqual x - 1 - \log x$ is the large deviations rate function for the law of large numbers for 
      exponential random variables of mean $1$.
    \item \label{lem:gamma-ii}
    For any $\CSz > 1$ and $k > 1$,
      \[
        \E{(X_\CSz - 1)^k} = O(\CSz^{- \lceil k/2 \rceil})\,.
      \]
  \end{enumerate}
\end{lem}
We refer to Section~\ref{sec:proof_gamma} for the classical proofs. We now
formulate our central result concerning the asymptotics for the average number
of misses.

\begin{thm}[Expected Number of Misses Expansion]
  \label{thm:asymptotic_expansion}
  Assume that the function $\AveNMiss$ is twice continuously differentiable in
  $(0, \infty)$. Let $\ChT{\AveSojourn}\defEqual
  \IAveNMiss^{-1}(\AveSojourn)$ (see Proposition~\ref{pro:key_identity}) and 
  \[
    \Error(\ChT{\AveSojourn})
    =
    \frac{\AveSojourn^2}{2\AveNMiss(\ChT{\AveSojourn})^2}
    \left(
      \AveNMiss''(\ChT{\AveSojourn})
      -
      \frac
      {\AveNMiss'(\ChT{\AveSojourn})^2}
      {\AveNMiss(\ChT{\AveSojourn})}
    \right).
  \]
  Then, as
  $\CSz$ goes to infinity with the scaling $\CSz = \CatArrRate \AveSojourn$ for
  fixed $\AveSojourn > 0$, we have
  \begin{equation}
    \label{eq:asymptotic_expansion}
    \E{\NMiss_\CSz}
    =
    \AveNMiss(\ChT{\AveSojourn})
    +
    \frac{\Error(\ChT{\AveSojourn})}{\CSz}
    +
    o \left( \rec{\CSz} \right).
  \end{equation} 
\end{thm}

We refer to Section~\ref{sec:asymptotic_expansion} for the proof.

Theorem~\ref{thm:asymptotic_expansion} justifies the accuracy
of the estimations that use the \emph{Che approximation}. In
the present setting, this heuristic consists in replacing the exit time
$\ExitTime{\CSz}$ in~\eqref{eq:ave_nmiss_exit_time} by the constant
$\widetilde{t}_\CSz = \NDifDocsMean^{-1}(\CSz)$, therefore estimating
$\E{\NMiss_\CSz}$ by $\AveNMiss(\widetilde{t}_\CSz)$. Now, under the scaling
$\CSz = \CatArrRate \AveSojourn$, the identity~\eqref{eq:key_identity_inverse}
entails that
\[
  \widetilde{t}_\CSz
  =
  \NDifDocsMean^{-1}(\CSz)
  =
  \IAveNMiss^{-1} \left( \frac{\CSz}{\CatArrRate} \right)
  =
  \IAveNMiss^{-1} (\AveSojourn)
  =
  \ChT{\AveSojourn}.
\]
The quantity $\widetilde{t}_\CSz$ is called in the literature the ``characteristic
time'', and this identity justifies this naming for
$\ChT{\AveSojourn}$ as well.
More importantly, the asymptotic expansion of
$\E{\NMiss_\CSz}$ in Theorem~\ref{thm:asymptotic_expansion} shows that the
error in the \emph{Che approximation} is of order $1/\CSz$ and specifies it precisely, for large $\CSz$ and
fixed average sojourn time $\AveSojourn$.

\begin{rmk}[Higher Order Expansions]
  \label{rmk:expansion}
  If the function $\AveNMiss$ has derivatives of higher order, the proof of
  Theorem~\ref{thm:asymptotic_expansion} together with Lemma~\ref{lem:gamma}
  allow us to derive higher order expansions of $\E{\NMiss_\CSz}$ in powers of
  $1/\CSz$. Specifically, to obtain an expansion at order $n$, we must expand
  $f_\AveSojourn$ to the $2n$-th order, since $\E{(X_\CSz -1)^k}$ is
  $O(1/\CSz^{\lceil k/2 \rceil})$ by Lemma~\ref{lem:gamma}.
  We then eventually obtain
  \[
    \E{\NMiss_\CSz} = \sum_{k=0}^{2n} \frac{f_\AveSojourn^{(k)}(1)}{j!} \frac{\phi_k(\CSz)}{\CSz^k} + o \left( \rec{\CSz^n} \right)
  \]
  where $\phi_k$ is a polynomial of degree $\lfloor k/2 \rfloor$, as shown in
  the proof of Lemma~\ref{lem:gamma}.
\end{rmk}

\begin{rmk}[Laplace Asymptotic Method]
  Theorem~\ref{thm:asymptotic_expansion} can be proved by purely
  analytical methods. Indeed, \eqref{eq:ave_nmiss_scaling} can be
  written in integral form, after using the change of variables $w \mapsto w/\CSz$, as
  \[
    \E{\NMiss_\CSz}
    =
    \frac{\CSz^\CSz}{\Gamma(\CSz)}
    \int_0^\infty e^{-\CSz(w - \log(w))} \frac{f_\AveSojourn(w)}{w} \, \dd w\,.
  \]
  Theorem~\ref{thm:asymptotic_expansion} then follows by expanding this integral
  using the Laplace method (see~\cite[(3.15)]{miller2006applied}) and
  $\Gamma(\CSz)$ using the Stirling formula. The expansion of the numerator must
  be performed through a Taylor series of function $f_\AveSojourn$ around the
  extremal point of the argument $w - \log (w)$ of the exponential term, that
  is, near $w=1$. This method is, however, more complicated, since it involves
  the expansion of both numerator and denominator in powers of $\sqrt{\CSz}$.
\end{rmk}

The smoothness assumptions on the function $\AveNMiss$ in
Theorem~\ref{thm:asymptotic_expansion} can usually be checked readily on a case
by case basis, by justifying interchange of derivation and expectation in
\eqref{eq:ave_nmiss_ttl} using dominated convergence. Nevertheless, it is
difficult to give a general result.

To conclude this section, we show that these smoothness assumptions hold for a
class of random intensities $\DocReqInt$ which is suitable for modeling
purposes. This class is built by randomly scaling a deterministic shape function
in both domain and range. It includes the families used in previous
works~\cite{traverso2013temporal,olmos2014catalog}.

\begin{pro}[Twice Continuously Differentiable Example]
  \label{pro:scale_family}
  Let $f \in \mathcal{C}^1(0, \infty)$ be a strictly positive unimodal function
  satisfying that $\int f = 1$, $\int f^2 < \infty$, and $\int |f'| < \infty$.
  Let
  $(\Pop, \Lifespan)$ be a couple of positive random variables with a smooth joint
  density, satisfying that $\E{\Pop} < \infty$ and $\E{\Pop \Lifespan} < \infty$. If
  the canonical document request intensity is of the form
  \begin{equation}
    \label{eq:scale_family_def}
    \DocReqInt(u) = \Pop \cdot f \! \left( \frac{u}{\Lifespan} \right), \quad u \geq 0,
  \end{equation}
  then the function $\AveNMiss$ is $\mathcal{C}^2(0, \infty)$ with derivatives
  given for $t > 0$ by
  \begin{equation}
    \label{eq:derivatives}
    \left\{
      \begin{aligned}
        \AveNMiss'(t)
        &=
        -\E{
          \Pop^2 \Lifespan
          \int_0^\infty
          f(u) f\!\left(u + \frac{t}{\Lifespan} \right)
          e^{-\Pop \Lifespan
            \left( F \left(u+\frac{t}{\Lifespan}\right) - F(u) \right)
          } \, \dd u
        },
        \\
        \AveNMiss''(t)
        &=
        \E{
          \Pop^3 \Lifespan
          \int_0^\infty
          f(u)
          f \!\left( u + \frac{t}{\Lifespan} \right)^2
          e^{-\Pop \Lifespan
            \left( F \left(u+\frac{t}{\Lifespan}\right) - F(u) \right)
          }
          \, \dd u
        }
        \\
        &\quad-
        \E{
          \Pop^2
          \int_0^\infty
          f(u)
          f' \!\left(u + \frac{t}{\Lifespan} \right)
          e^{-\Pop \Lifespan
            \left( F \left(u+\frac{t}{\Lifespan}\right) - F(u) \right)
          }
          \, \dd u
        },
      \end{aligned}
    \right.
  \end{equation}
  where $F(u) = \int_0^u f(v) dv$.
\end{pro}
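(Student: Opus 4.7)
The plan is to rewrite $\AveNMiss(t)$ in the coordinates of the scale family, differentiate formally under the integral sign, and justify the interchanges by dominated convergence using the structural properties of $f$ together with the moment hypotheses on $(\Pop, \Lifespan)$.

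To begin, I would substitute $\DocReqInt(u)=\Pop\,f(u/\Lifespan)$ into the definitions to get $\DocReqMeanFn(u)=\Pop\Lifespan\,F(u/\Lifespan)$, whence $\AveNReqs=\Pop\Lifespan$ has finite expectation by hypothesis. The change of variable $v=u/\Lifespan$ in~\eqref{eq:ave_nmiss_ttl} produces the compact representation
\[
\AveNMiss(t)=\E{\Pop\Lifespan\int_0^\infty f(v)\,e^{-\Pop\Lifespan(F(v+t/\Lifespan)-F(v))}\,\dd v},
\]
which is manifestly finite for every $t\ge0$. Formal differentiation in $t$, using
\[
\frac{\dd}{\dd t}e^{-\Pop\Lifespan(F(v+t/\Lifespan)-F(v))}=-\Pop\,f(v+t/\Lifespan)\,e^{-\Pop\Lifespan(F(v+t/\Lifespan)-F(v))},
\]
immediately yields the announced formula for $\AveNMiss'$; a second formal differentiation produces two contributions, one from differentiating $f(v+t/\Lifespan)$ and one from the exponential, which match the two summands stated for $\AveNMiss''$.

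The substantive step is to justify these interchanges by dominated convergence on any compact subinterval $[t_0,t_1]\subset(0,\infty)$. The key ingredient is the elementary bound $xe^{-x}\le e^{-1}$ applied with $x=\Pop\Lifespan(F(v+t/\Lifespan)-F(v))$, which gives
\[
\Pop^2\Lifespan\,e^{-\Pop\Lifespan(F(v+t/\Lifespan)-F(v))}\le \frac{\Pop}{e\,(F(v+t/\Lifespan)-F(v))}.
\]
The unimodality of $f$ with mode $v^*$ then provides, on each side of $v^*$, the lower bound $F(v+t/\Lifespan)-F(v)\ge (t/\Lifespan)\min(f(v),f(v+t/\Lifespan))$, which cancels one of the $f$-factors in the numerator and leaves an integrand pointwise controlled by $C\,\Pop\Lifespan f(\cdot)/t$. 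Integrating in $v$ using $\int f=1$ and handling the short transitional region $[v^*-t/\Lifespan,v^*]$ by the continuity and strict positivity of $f$ produces, for $t\in[t_0,t_1]$, a dominating function of the form $C(t_0)\,\Pop\Lifespan$, which is integrable by $\E{\Pop\Lifespan}<\infty$. The $f'$ summand in the formula for $\AveNMiss''$ is controlled analogously, with $\int|f'|<\infty$ playing the role of $\int f=1$.

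The main obstacle will be the treatment of the $\Pop^3\Lifespan$ prefactor in the second derivative: a single use of the bound above leaves an undesirable $\Pop^2$ factor (not controlled by $\E{\Pop}<\infty$), while a double use via $x^2e^{-x}\le 4e^{-2}$ leaves an undesirable $1/\Lifespan$ factor. The delicate point will be to interpolate between these two regimes, splitting the domain according to whether $\Pop\Lifespan(F(v+t/\Lifespan)-F(v))$ is large or small, and separately handling the regime of small $\Lifespan$ (where $t/\Lifespan$ is large and the unimodality lower bound must be replaced by $F(v+t/\Lifespan)-F(v)\ge 1-F(v^*)$ on $v\le v^*$). The smoothness of the joint density of $(\Pop,\Lifespan)$ enters here to legitimate, if needed, an integration by parts in $\Pop$ that transfers excess powers of $\Pop$ onto derivatives of the density, reducing the problem to the hypothesized moments $\E{\Pop}<\infty$ and $\E{\Pop\Lifespan}<\infty$. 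Once these bounds are in place, dominated convergence delivers both the $\mathcal{C}^2$ regularity of $\AveNMiss$ and the claimed expressions, and continuity of the derivatives follows from continuity in $t$ of the integrands and the same dominating functions.
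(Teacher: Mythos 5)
Your overall strategy --- substitute the scale form, change variables, differentiate under the integral, and justify by dominated convergence using the quasi-concavity lower bound $F(u+t/\Lifespan)-F(u)\ge \min\bigl(f(u),f(u+t/\Lifespan)\bigr)\,t/\Lifespan$ together with $x^k e^{-ax}\le C_k a^{-k}$ --- is exactly the paper's, and your treatment of $\AveNMiss$ and $\AveNMiss'$ is sound. But your analysis of the second derivative misplaces the difficulty, and the term you wave off as ``controlled analogously'' is the one that actually requires a new idea.

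First, the $\Pop^3\Lifespan$ term is not the obstacle you describe. After inserting the lower bound, the exponent satisfies $\Pop\Lifespan\bigl(F(u+t/\Lifespan)-F(u)\bigr)\ge\Pop f(\cdot)\,t$: the $\Lifespan$ cancels, so applying $x^2e^{-ax}\le 4/(a^2e^2)$ with $x=\Pop$ and $a=f(\cdot)\,t$ absorbs both excess powers of $\Pop$ at the cost of $1/(f(\cdot)^2t^2)$, and no stray $1/\Lifespan$ ever appears. On $[0,u^*]$ the leftover $1/f(u)$ is controlled by $f(u)\ge f(0)>0$ and the remaining $f(u+t/\Lifespan)^2$ integrates thanks to $\int f^2<\infty$; on $[u^*,\infty)$ the squared factor is exactly the one paired with the exponent. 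Either way one lands on $C(t)\,\E{\Pop\Lifespan}$. Your proposed interpolation between regimes, and in particular the integration by parts in $\Pop$ against the joint density, are therefore unnecessary --- and the latter would in any case require unstated integrability assumptions on the derivative of the density, since only smoothness and two moments are hypothesized. Second, and more seriously, the $f'$ term is not analogous to the first derivative. On $[u^*,\infty)$ its integrand is $\Pop^2f(u)\,|f'(u+t/\Lifespan)|\,e^{-\Pop f(u+t/\Lifespan)t}$, and the factor $f(u+t/\Lifespan)$ needed to pair with the exponent is absent: there $f(u)>f(u+t/\Lifespan)$, so the bound $\Pop f(u)e^{-\Pop f(u+t/\Lifespan)t}\le 1/(te)$ fails in the needed direction, and $|f'|$ bears no pointwise relation to $f$. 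A direct estimate thus leaves a bare $\Pop^2$, while $\E{\Pop^2}$ is not assumed finite (the paper's own numerical examples have infinite variance). The paper resolves this by an integration by parts in the spatial variable $u$, writing $\Pop\,|f'(u+t/\Lifespan)|\,e^{-\Pop f(u+t/\Lifespan)t}=t^{-1}\partial_u e^{-\Pop f(u+t/\Lifespan)t}$ (valid since $f'\le0$ beyond the mode), transferring the derivative onto $f(u)$ and discarding two terms by a sign argument on $[u^*,\tilde u]$ and at the boundary. This spatial integration by parts is the ingredient missing from your proposal.
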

We defer the proof of the  proposition to Section~\ref{sec:differentiability}.

Note that Proposition~\ref{pro:scale_family} only imposes mild conditions on
the distribution of $(\Pop, \Lifespan)$. The admitted shape functions $f$
include exponential and power law decreasing profiles, and
Gaussian curves restricted to $[0, \infty)$. In addition, the assumption of $f$
being strictly positive on $[0, \infty)$ can be weakened to that of being
positive only in a compact interval; this in turn implies that $f'$ is not
differentiable everywhere and the second derivative of $\AveNMiss$ will thus
contain additional terms from the integral of $f'$. These terms can be obtained
by integration by parts (see~\cite[Th. 3.36]{folland2013real} for a
generalized form).


One example of such a family with compact support is given by the ``Box Model'',
previously analyzed in~\cite{olmos2014catalog}, which can be constructed by
simply taking $f = \I_{[0,1]}$. In this case, $\AveNMiss$ and its derivatives
reduce to
\begin{equation}
  \label{eq:derivatives_box}
  \left\{
    \begin{aligned}
      \AveNMiss(t)
      &=
      \E{
        \left( 1-e^{-\Pop \Lifespan} \right) \I_{\{\Lifespan \leq t\}}
        +
        \left( 1-e^{-\Pop t} + \Pop (\Lifespan - t) e^{-\Pop t} \right) \I_{\{\Lifespan > t\}}
      },
      \\
      \AveNMiss'(t)
      &=
      -\E{\Pop^2(\Lifespan - t) e^{-\Pop t} \, \I_{\{\Lifespan > t\}}},
      \\
      \AveNMiss''(t)
      &=
      \E{(\Pop^2 + \Pop^3(\Lifespan - t))e^{-\Pop t}\, \I_{\{\Lifespan > t\}}}.
    \end{aligned}
  \right.
\end{equation}
We will use this model for a numerical illustration in the next section.

\section{Numerical Experiments}
\label{sec:numerics}
We provide some numerical results to validate the accuracy of asymptotic
expansion~\eqref{eq:asymptotic_expansion}, by comparing it to the values
obtained from the system simulation. In our experiments, we used the ``Box
Model'' in which the canonical intensity function is given by
\[
  \DocReqInt(u) = \Pop \cdot \II{ 0 \leq u \leq \Lifespan}, \quad u \geq 0,
\]
where the random pair $(\Pop, \Lifespan)$
represents the request rate and lifespan of a document. In view
of~\eqref{eq:asymptotic_expansion}, we obtain the zero order and first order
approximations for the hit probability $\HitProba{\CSz}$, namely
\begin{equation}
  \label{eq:hit_ratio_approx}
  \HitProba{\CSz}
  =
  1 - \MissProba{\CSz}
  =
  1 - \frac{\E{\NMiss_\CSz}}{\E{\AveNReqs}}
  \approx
  \begin{dcases}
    1
    -
    \frac{\AveNMiss(\ChT{\AveSojourn})}{\E{\AveNReqs}},
    & 0\text{-th Order}
    \\
    1
    -
    \frac{\AveNMiss(\ChT{\AveSojourn}) + e(\ChT{\AveSojourn})/\CSz}{\E{\AveNReqs}},
    & 1\text{-st Order}
  \end{dcases}
\end{equation}
where $\E{\AveNReqs} = \E{\Pop \Lifespan}$.

For a given general distribution of $(\Pop, \Lifespan)$, we cannot deduce explicit expressions for $\AveNMiss,
\AveNMiss', \AveNMiss'', \IAveNMiss$, and $\IAveNMiss^{-1}$ from~\eqref{eq:derivatives_box}. In particular,
there is usually no formula for $\ChT{\AveSojourn}$ in terms of
$\AveSojourn$. In consequence, we resorted to numerical integration and inversion
to obtain the hit probability estimates in~\eqref{eq:hit_ratio_approx}.

As argued in~\cite{olmos2014catalog}, actual data traces suggest that the
distributions of variable $\Pop$ and $\Lifespan$ are heavy tailed with infinite
variance, that is, with tail index $\alpha \in (1,2)$. For our experiments, we
consequently chose $\Pop$ and $\Lifespan$ to be distributed as independent
Pareto-Lomax variables, with probability density $\alpha \sigma^\alpha/(\sigma
+ x)^{\alpha + 1}$ for $ x > 0$, with respective parameters $(\alpha=1.9,
\sigma=22.5)$ and $(\alpha=1.7, \sigma=0.07)$. Such values have been taken so
that the simulation time is not excessive; they provide a ``box'' of average
width $0.1$ and height $25$ with high volatility since neither $\Pop$ nor
$\Lifespan$ have a finite variance.

We generated the request process associated with these intensity functions for
various values of $\CatArrRate$ ranging from $10$ to $1{,}000$. For each request
sequence, we simulated an LRU cache and obtained the empirical hit probability
for various capacities $\CSz$.

To obtain reliable results, the heavy tailed
nature of the input distributions requires to use the stable-law central limit
theorem (see \cite[Th. 4.5.1]{whitt2002stochastic}). Specifically, there
exists a so-called stable law $S_\alpha(\sigma, \beta, \mu)$ with scaling
parameter $\sigma$ and a constant $K_\alpha$ such that, in distribution,
\[
  \lim_{n \to \infty} \rec{K_\alpha} \rec{n^{1/\alpha}} \sum_{i=1}^n (\Lifespan_i - n \E{\Lifespan})
  =
  S_\alpha(1, 1, 0)\,.
\]
This allows to heuristically quantify the convergence
rate for the law of large numbers by considering that
\[
  \rec{n} \sum_{i=1}^n (\Lifespan_i - n \E{\Lifespan})
  \underset{n \to \infty}{\approx}
  S_{\alpha}\left(\frac{K_{\alpha}}{n^{1-1/\alpha}}, 1, 0 \right)
\]
(in the present case, $\alpha=1.7$ for $\Lifespan$). We
then chose the simulation time $S$ such that the average number of observed documents $n =
\CatArrRate S \times \E{1 - e^{-\Pop \Lifespan}}$ is such that scaling parameter
$K_{\alpha}/n^{1-1/\alpha}$ is smaller than $10^{-3}$ (such a value of $n$ ensures the same accuracy
for the request rate $\Pop$ with larger tail index $\alpha=1.9$).
Besides, we also chose $S$ large enough to ensure that there is enough time for all observable
documents to appear in the simulated trace.

We show in Fig.~\ref{fig:convergence} some of the resulting hit probability curves from these
experiments. We observe that the zero order approximation in~\eqref{eq:hit_ratio_approx} is almost exact
already for $\CatArrRate = 500$. The error incurred by the approximation for lower $\CatArrRate$ can
be corrected by using the first order approximation in~\eqref{eq:hit_ratio_approx}, as shown in
Fig.~\ref{fig:small_rate} for $\CatArrRate = 50$. For even lower intensities, this correction
might not be enough to approximate the real hit probability, as illustrated in
Fig.~\ref{fig:smaller_rate} for $\CatArrRate = 10$; the higher order expansion of
Remark~\ref{rmk:expansion} would then be needed.

\begin{figure}[!thb]
\begingroup
  \makeatletter
  \providecommand\color[2][]{%
    \GenericError{(gnuplot) \space\space\space\@spaces}{%
      Package color not loaded in conjunction with
      terminal option `colourtext'%
    }{See the gnuplot documentation for explanation.%
    }{Either use 'blacktext' in gnuplot or load the package
      color.sty in LaTeX.}%
    \renewcommand\color[2][]{}%
  }%
  \providecommand\includegraphics[2][]{%
    \GenericError{(gnuplot) \space\space\space\@spaces}{%
      Package graphicx or graphics not loaded%
    }{See the gnuplot documentation for explanation.%
    }{The gnuplot epslatex terminal needs graphicx.sty or graphics.sty.}%
    \renewcommand\includegraphics[2][]{}%
  }%
  \providecommand\rotatebox[2]{#2}%
  \@ifundefined{ifGPcolor}{%
    \newif\ifGPcolor
    \GPcolortrue
  }{}%
  \@ifundefined{ifGPblacktext}{%
    \newif\ifGPblacktext
    \GPblacktexttrue
  }{}%
  \let\gplgaddtomacro\g@addto@macro
  \gdef\gplbacktext{}%
  \gdef\gplfronttext{}%
  \makeatother
  \ifGPblacktext
    \def\colorrgb#1{}%
    \def\colorgray#1{}%
  \else
    \ifGPcolor
      \def\colorrgb#1{\color[rgb]{#1}}%
      \def\colorgray#1{\color[gray]{#1}}%
      \expandafter\def\csname LTw\endcsname{\color{white}}%
      \expandafter\def\csname LTb\endcsname{\color{black}}%
      \expandafter\def\csname LTa\endcsname{\color{black}}%
      \expandafter\def\csname LT0\endcsname{\color[rgb]{1,0,0}}%
      \expandafter\def\csname LT1\endcsname{\color[rgb]{0,1,0}}%
      \expandafter\def\csname LT2\endcsname{\color[rgb]{0,0,1}}%
      \expandafter\def\csname LT3\endcsname{\color[rgb]{1,0,1}}%
      \expandafter\def\csname LT4\endcsname{\color[rgb]{0,1,1}}%
      \expandafter\def\csname LT5\endcsname{\color[rgb]{1,1,0}}%
      \expandafter\def\csname LT6\endcsname{\color[rgb]{0,0,0}}%
      \expandafter\def\csname LT7\endcsname{\color[rgb]{1,0.3,0}}%
      \expandafter\def\csname LT8\endcsname{\color[rgb]{0.5,0.5,0.5}}%
    \else
      \def\colorrgb#1{\color{black}}%
      \def\colorgray#1{\color[gray]{#1}}%
      \expandafter\def\csname LTw\endcsname{\color{white}}%
      \expandafter\def\csname LTb\endcsname{\color{black}}%
      \expandafter\def\csname LTa\endcsname{\color{black}}%
      \expandafter\def\csname LT0\endcsname{\color{black}}%
      \expandafter\def\csname LT1\endcsname{\color{black}}%
      \expandafter\def\csname LT2\endcsname{\color{black}}%
      \expandafter\def\csname LT3\endcsname{\color{black}}%
      \expandafter\def\csname LT4\endcsname{\color{black}}%
      \expandafter\def\csname LT5\endcsname{\color{black}}%
      \expandafter\def\csname LT6\endcsname{\color{black}}%
      \expandafter\def\csname LT7\endcsname{\color{black}}%
      \expandafter\def\csname LT8\endcsname{\color{black}}%
    \fi
  \fi
  \setlength{\unitlength}{0.0500bp}%
  \begin{picture}(5760.00,3556.00)%
    \gplgaddtomacro\gplbacktext{%
      \csname LTb\endcsname%
      \put(660,640){\makebox(0,0)[r]{\strut{} 0}}%
      \csname LTb\endcsname%
      \put(660,1175){\makebox(0,0)[r]{\strut{} 0.2}}%
      \csname LTb\endcsname%
      \put(660,1710){\makebox(0,0)[r]{\strut{} 0.4}}%
      \csname LTb\endcsname%
      \put(660,2245){\makebox(0,0)[r]{\strut{} 0.6}}%
      \csname LTb\endcsname%
      \put(660,2780){\makebox(0,0)[r]{\strut{} 0.8}}%
      \csname LTb\endcsname%
      \put(660,3315){\makebox(0,0)[r]{\strut{} 1}}%
      \csname LTb\endcsname%
      \put(780,440){\makebox(0,0){\strut{} 0}}%
      \csname LTb\endcsname%
      \put(1704,440){\makebox(0,0){\strut{} 0.05}}%
      \csname LTb\endcsname%
      \put(2628,440){\makebox(0,0){\strut{} 0.1}}%
      \csname LTb\endcsname%
      \put(3551,440){\makebox(0,0){\strut{} 0.15}}%
      \csname LTb\endcsname%
      \put(4475,440){\makebox(0,0){\strut{} 0.2}}%
      \csname LTb\endcsname%
      \put(5399,440){\makebox(0,0){\strut{} 0.25}}%
      \put(200,1977){\rotatebox{-270}{\makebox(0,0){\strut{}Hit Probability}}}%
      \put(3089,140){\makebox(0,0){\strut{}Mean Sojourn Time $\theta = C/\gamma$}}%
    }%
    \gplgaddtomacro\gplfronttext{%
      \csname LTb\endcsname%
      \put(4736,1628){\makebox(0,0)[r]{\strut{}Simulation for $\gamma = 12$}}%
      \csname LTb\endcsname%
      \put(4736,1378){\makebox(0,0)[r]{\strut{}Simulation for $\gamma = 50$}}%
      \csname LTb\endcsname%
      \put(4736,1128){\makebox(0,0)[r]{\strut{}Simulation for $\gamma = 500$}}%
      \csname LTb\endcsname%
      \put(4736,878){\makebox(0,0)[r]{\strut{}0th Order Approximation}}%
    }%
    \gplbacktext
    \put(0,0){\includegraphics{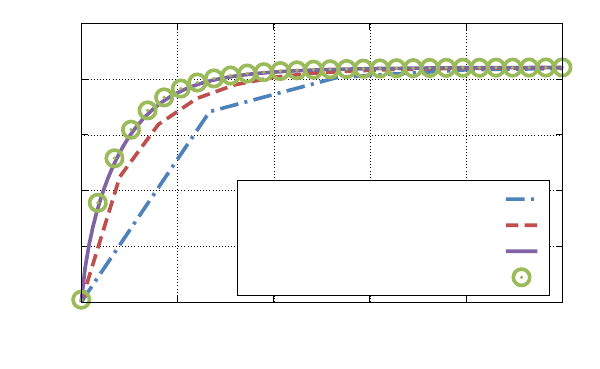}}%
    \gplfronttext
  \end{picture}%
\endgroup
  \caption{\textit{Convergence of the hit probability curves obtained in the experiments to the 0th
      order Che approximation.}}
  \label{fig:convergence}
\end{figure}

\begin{figure}[!thb]
  \begin{subfigure}{\textwidth}
    \centering
\begingroup
  \makeatletter
  \providecommand\color[2][]{%
    \GenericError{(gnuplot) \space\space\space\@spaces}{%
      Package color not loaded in conjunction with
      terminal option `colourtext'%
    }{See the gnuplot documentation for explanation.%
    }{Either use 'blacktext' in gnuplot or load the package
      color.sty in LaTeX.}%
    \renewcommand\color[2][]{}%
  }%
  \providecommand\includegraphics[2][]{%
    \GenericError{(gnuplot) \space\space\space\@spaces}{%
      Package graphicx or graphics not loaded%
    }{See the gnuplot documentation for explanation.%
    }{The gnuplot epslatex terminal needs graphicx.sty or graphics.sty.}%
    \renewcommand\includegraphics[2][]{}%
  }%
  \providecommand\rotatebox[2]{#2}%
  \@ifundefined{ifGPcolor}{%
    \newif\ifGPcolor
    \GPcolortrue
  }{}%
  \@ifundefined{ifGPblacktext}{%
    \newif\ifGPblacktext
    \GPblacktexttrue
  }{}%
  \let\gplgaddtomacro\g@addto@macro
  \gdef\gplbacktext{}%
  \gdef\gplfronttext{}%
  \makeatother
  \ifGPblacktext
    \def\colorrgb#1{}%
    \def\colorgray#1{}%
  \else
    \ifGPcolor
      \def\colorrgb#1{\color[rgb]{#1}}%
      \def\colorgray#1{\color[gray]{#1}}%
      \expandafter\def\csname LTw\endcsname{\color{white}}%
      \expandafter\def\csname LTb\endcsname{\color{black}}%
      \expandafter\def\csname LTa\endcsname{\color{black}}%
      \expandafter\def\csname LT0\endcsname{\color[rgb]{1,0,0}}%
      \expandafter\def\csname LT1\endcsname{\color[rgb]{0,1,0}}%
      \expandafter\def\csname LT2\endcsname{\color[rgb]{0,0,1}}%
      \expandafter\def\csname LT3\endcsname{\color[rgb]{1,0,1}}%
      \expandafter\def\csname LT4\endcsname{\color[rgb]{0,1,1}}%
      \expandafter\def\csname LT5\endcsname{\color[rgb]{1,1,0}}%
      \expandafter\def\csname LT6\endcsname{\color[rgb]{0,0,0}}%
      \expandafter\def\csname LT7\endcsname{\color[rgb]{1,0.3,0}}%
      \expandafter\def\csname LT8\endcsname{\color[rgb]{0.5,0.5,0.5}}%
    \else
      \def\colorrgb#1{\color{black}}%
      \def\colorgray#1{\color[gray]{#1}}%
      \expandafter\def\csname LTw\endcsname{\color{white}}%
      \expandafter\def\csname LTb\endcsname{\color{black}}%
      \expandafter\def\csname LTa\endcsname{\color{black}}%
      \expandafter\def\csname LT0\endcsname{\color{black}}%
      \expandafter\def\csname LT1\endcsname{\color{black}}%
      \expandafter\def\csname LT2\endcsname{\color{black}}%
      \expandafter\def\csname LT3\endcsname{\color{black}}%
      \expandafter\def\csname LT4\endcsname{\color{black}}%
      \expandafter\def\csname LT5\endcsname{\color{black}}%
      \expandafter\def\csname LT6\endcsname{\color{black}}%
      \expandafter\def\csname LT7\endcsname{\color{black}}%
      \expandafter\def\csname LT8\endcsname{\color{black}}%
    \fi
  \fi
  \setlength{\unitlength}{0.0500bp}%
  \begin{picture}(5182.00,3174.00)%
    \gplgaddtomacro\gplbacktext{%
      \csname LTb\endcsname%
      \put(660,640){\makebox(0,0)[r]{\strut{} 0}}%
      \csname LTb\endcsname%
      \put(660,1099){\makebox(0,0)[r]{\strut{} 0.2}}%
      \csname LTb\endcsname%
      \put(660,1557){\makebox(0,0)[r]{\strut{} 0.4}}%
      \csname LTb\endcsname%
      \put(660,2016){\makebox(0,0)[r]{\strut{} 0.6}}%
      \csname LTb\endcsname%
      \put(660,2474){\makebox(0,0)[r]{\strut{} 0.8}}%
      \csname LTb\endcsname%
      \put(660,2933){\makebox(0,0)[r]{\strut{} 1}}%
      \csname LTb\endcsname%
      \put(780,440){\makebox(0,0){\strut{} 0}}%
      \csname LTb\endcsname%
      \put(1588,440){\makebox(0,0){\strut{} 2}}%
      \csname LTb\endcsname%
      \put(2396,440){\makebox(0,0){\strut{} 4}}%
      \csname LTb\endcsname%
      \put(3205,440){\makebox(0,0){\strut{} 6}}%
      \csname LTb\endcsname%
      \put(4013,440){\makebox(0,0){\strut{} 8}}%
      \csname LTb\endcsname%
      \put(4821,440){\makebox(0,0){\strut{} 10}}%
      \put(200,1786){\rotatebox{-270}{\makebox(0,0){\strut{}Hit Probability}}}%
      \put(2800,140){\makebox(0,0){\strut{}Cache Size}}%
    }%
    \gplgaddtomacro\gplfronttext{%
      \csname LTb\endcsname%
      \put(4158,1378){\makebox(0,0)[r]{\strut{}Simulation for $\gamma = 50$}}%
      \csname LTb\endcsname%
      \put(4158,1128){\makebox(0,0)[r]{\strut{}0th order approximation}}%
      \csname LTb\endcsname%
      \put(4158,878){\makebox(0,0)[r]{\strut{}1st order approximation}}%
    }%
    \gplbacktext
    \put(0,0){\includegraphics{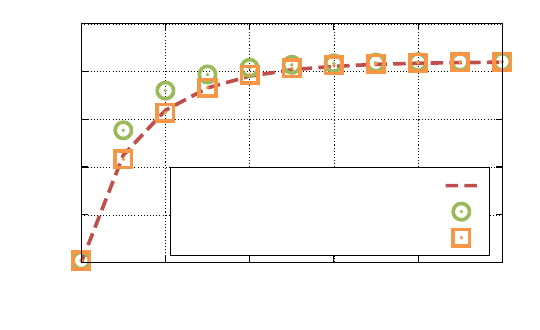}}%
    \gplfronttext
  \end{picture}%
\endgroup
    \caption{\textit{Approximations for $\CatArrRate = 50$}}
    \label{fig:small_rate}
  \end{subfigure}
  \begin{subfigure}{\textwidth}
    \centering
\begingroup
  \makeatletter
  \providecommand\color[2][]{%
    \GenericError{(gnuplot) \space\space\space\@spaces}{%
      Package color not loaded in conjunction with
      terminal option `colourtext'%
    }{See the gnuplot documentation for explanation.%
    }{Either use 'blacktext' in gnuplot or load the package
      color.sty in LaTeX.}%
    \renewcommand\color[2][]{}%
  }%
  \providecommand\includegraphics[2][]{%
    \GenericError{(gnuplot) \space\space\space\@spaces}{%
      Package graphicx or graphics not loaded%
    }{See the gnuplot documentation for explanation.%
    }{The gnuplot epslatex terminal needs graphicx.sty or graphics.sty.}%
    \renewcommand\includegraphics[2][]{}%
  }%
  \providecommand\rotatebox[2]{#2}%
  \@ifundefined{ifGPcolor}{%
    \newif\ifGPcolor
    \GPcolortrue
  }{}%
  \@ifundefined{ifGPblacktext}{%
    \newif\ifGPblacktext
    \GPblacktexttrue
  }{}%
  \let\gplgaddtomacro\g@addto@macro
  \gdef\gplbacktext{}%
  \gdef\gplfronttext{}%
  \makeatother
  \ifGPblacktext
    \def\colorrgb#1{}%
    \def\colorgray#1{}%
  \else
    \ifGPcolor
      \def\colorrgb#1{\color[rgb]{#1}}%
      \def\colorgray#1{\color[gray]{#1}}%
      \expandafter\def\csname LTw\endcsname{\color{white}}%
      \expandafter\def\csname LTb\endcsname{\color{black}}%
      \expandafter\def\csname LTa\endcsname{\color{black}}%
      \expandafter\def\csname LT0\endcsname{\color[rgb]{1,0,0}}%
      \expandafter\def\csname LT1\endcsname{\color[rgb]{0,1,0}}%
      \expandafter\def\csname LT2\endcsname{\color[rgb]{0,0,1}}%
      \expandafter\def\csname LT3\endcsname{\color[rgb]{1,0,1}}%
      \expandafter\def\csname LT4\endcsname{\color[rgb]{0,1,1}}%
      \expandafter\def\csname LT5\endcsname{\color[rgb]{1,1,0}}%
      \expandafter\def\csname LT6\endcsname{\color[rgb]{0,0,0}}%
      \expandafter\def\csname LT7\endcsname{\color[rgb]{1,0.3,0}}%
      \expandafter\def\csname LT8\endcsname{\color[rgb]{0.5,0.5,0.5}}%
    \else
      \def\colorrgb#1{\color{black}}%
      \def\colorgray#1{\color[gray]{#1}}%
      \expandafter\def\csname LTw\endcsname{\color{white}}%
      \expandafter\def\csname LTb\endcsname{\color{black}}%
      \expandafter\def\csname LTa\endcsname{\color{black}}%
      \expandafter\def\csname LT0\endcsname{\color{black}}%
      \expandafter\def\csname LT1\endcsname{\color{black}}%
      \expandafter\def\csname LT2\endcsname{\color{black}}%
      \expandafter\def\csname LT3\endcsname{\color{black}}%
      \expandafter\def\csname LT4\endcsname{\color{black}}%
      \expandafter\def\csname LT5\endcsname{\color{black}}%
      \expandafter\def\csname LT6\endcsname{\color{black}}%
      \expandafter\def\csname LT7\endcsname{\color{black}}%
      \expandafter\def\csname LT8\endcsname{\color{black}}%
    \fi
  \fi
  \setlength{\unitlength}{0.0500bp}%
  \begin{picture}(5182.00,3174.00)%
    \gplgaddtomacro\gplbacktext{%
      \csname LTb\endcsname%
      \put(660,640){\makebox(0,0)[r]{\strut{} 0}}%
      \csname LTb\endcsname%
      \put(660,1099){\makebox(0,0)[r]{\strut{} 0.2}}%
      \csname LTb\endcsname%
      \put(660,1557){\makebox(0,0)[r]{\strut{} 0.4}}%
      \csname LTb\endcsname%
      \put(660,2016){\makebox(0,0)[r]{\strut{} 0.6}}%
      \csname LTb\endcsname%
      \put(660,2474){\makebox(0,0)[r]{\strut{} 0.8}}%
      \csname LTb\endcsname%
      \put(660,2933){\makebox(0,0)[r]{\strut{} 1}}%
      \csname LTb\endcsname%
      \put(780,440){\makebox(0,0){\strut{} 0}}%
      \csname LTb\endcsname%
      \put(1454,440){\makebox(0,0){\strut{} 1}}%
      \csname LTb\endcsname%
      \put(2127,440){\makebox(0,0){\strut{} 2}}%
      \csname LTb\endcsname%
      \put(2801,440){\makebox(0,0){\strut{} 3}}%
      \csname LTb\endcsname%
      \put(3474,440){\makebox(0,0){\strut{} 4}}%
      \csname LTb\endcsname%
      \put(4148,440){\makebox(0,0){\strut{} 5}}%
      \csname LTb\endcsname%
      \put(4821,440){\makebox(0,0){\strut{} 6}}%
      \put(200,1786){\rotatebox{-270}{\makebox(0,0){\strut{}Hit Probability}}}%
      \put(2800,140){\makebox(0,0){\strut{}Cache Size}}%
    }%
    \gplgaddtomacro\gplfronttext{%
      \csname LTb\endcsname%
      \put(4158,1378){\makebox(0,0)[r]{\strut{}Simulation for $\gamma = 12$}}%
      \csname LTb\endcsname%
      \put(4158,1128){\makebox(0,0)[r]{\strut{}0th order approximation}}%
      \csname LTb\endcsname%
      \put(4158,878){\makebox(0,0)[r]{\strut{}1st order approximation}}%
    }%
    \gplbacktext
    \put(0,0){\includegraphics{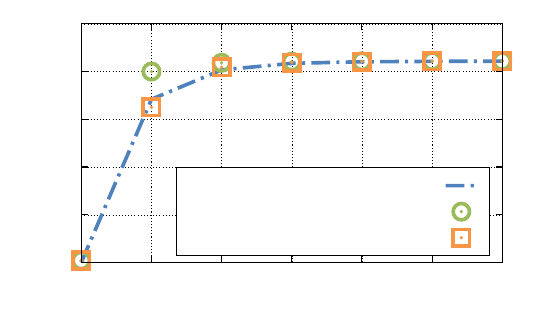}}%
    \gplfronttext
  \end{picture}%
\endgroup
    \caption{\textit{Approximations for $\CatArrRate = 10$}}
    \label{fig:smaller_rate}
  \end{subfigure}
  \caption{\textit{Comparison between hit probability curves obtained in the experiments and their
      analytic approximations in the case of low $\CatArrRate$.}}
\end{figure}

The above numerical results therefore illustrate the accuracy of the
asymptotic expansion for the hit probability.

\section{Concluding Remarks}
\label{sec:conclusion}

In this paper, we have estimated the hit probability of a LRU cache for a
traffic model based on a Poisson cluster point process. In this endeavor, we
have built using Palm theory a probability space where a tagged document can be
analyzed independently from the rest of the process. In the case of the LRU
replacement policy, this property is key for the analysis, since it allowed us
to derive an integral expression for the expected number of misses of the
tagged object.

Using this expression, we were able to obtain an asymptotic expansion of this
integral for large $\CSz$ under the scaling $\CSz = \CatArrRate \AveSojourn$
for fixed $ \AveSojourn>0$. This expansion quantifies rigorously and in
precise fashion the error made when applying the commonly used ``Che
approximation''. We have further shown that the latter expansion is valid for a
sub-class of processes suitable for modeling purposes. Finally, the accuracy
of our theoretical results has been illustrated by numerical experiments.

Our framework could be used to analyze other caching policies satisfying that
the eviction policy for the canonical document depends only on the rest of the
document request process. Examples of such caching policies found in the
literature are RANDOM, which evicts a uniformly chosen document when adding a
new document to the cache, and FIFO, which works as LRU except that it does not
move a requested document that is already in the cache to the front of it. Such
alternative policies may be relevant in that the replacement operations are
somewhat simpler than LRU, and this may compensate their probable lesser
performance in terms of the hit probability. However, the miss events for this
policies are more intricate to analyze, since they depend on the missed requests in
the rest $\MarkCatPP \setminus \delta_{0,\DocReqPP_0}$.


Another possible extension of our study would be to take into account the fact that
documents have random sizes. These sizes and the cache size $\CSz$ should be
measured for instance in bits, packets, or by a continuous value in $\RR^+$.
The document sizes can be incorporated as additional marks to the cluster point
process. In this case, the process $\NDifDocs$ defining the canonical exit time
becomes a compound inhomogeneous Poisson process, summing up these file sizes.
The exit time to consider for a canonical document of size $S$ is then the
first passage time of $X$ strictly above $\CSz-S$.


\section{Proof Section}
\label{sec:proofs}
\subsection{Proof of Proposition~\ref{pro:palm_distribution}}
\label{sec:proof_palm_distribution}

The Slivnyak-Mecke Theorem characterizes the Laplace functional of Poisson point processes under their
Palm distributions, see \cite[Prop.~13.1.VII]{daley2008introduction}.
  Here, for $(u, \nu)$ in $\RR \times \PPSpace{\RR}$, the
  Laplace functional $\LF[][u, \nu]$ of $\tilde{\TotReqPP}$ under the Palm
  distribution $\Palm{u, \nu}$ can be expressed by
  \[
    \LF[f][u, \nu] = e^{-f(u, \nu)} \cdot \LF[f]
  \]
  for any measurable function $f:\RR \times \PPSpace{\RR} \to \RR^+$, where
  $\LF$ is the Laplace functional on the original probability space. The
  Laplace functional $\LF[][u]$ under $\APalm{u}$ is consequently given by
  \[
    \LF[f][u] = \E{\LF[f][u, \DocReqPP_u]} = \E{e^{-f(u, \DocReqPP_u)}} \LF[f].
  \]
  Note that the expectation in the right-hand side is the Laplace functional of
  the point process $\delta_{u, \DocReqPP_u}$. Since Laplace functionals
  characterize point processes, the conclusion follows.

\subsection{Proof of Proposition~\ref{pro:characterization_ndifdocs}}
\label{sec:proof_characterization_ndifdocs}

 Condition~\eqref{eq:finiteness_condition} implies that $\NDifDocsMean^s(t) < \infty$ for all $t \geq s$. 
 Among all points $(\CatArr,\DocReqPP_\CatArr)$ in the rest $\MarkCatPP \setminus \delta_{0,\DocReqPP_0}$,
 the process $(\NDifDocs_u^s)_{s\le u \le t}$ counts those such that $F^s(\DocReqPP_\CatArr)$ falls in $[s,t]$,
 and for $h \ge 0$ the increment $\NDifDocs_{t+h}^s - \NDifDocs_t^s$ counts those
 such that $F^s(\DocReqPP_\CatArr)$ falls in $(t,t+h]$.
 Since the corresponding two subsets of $\RR \times \PPSpace{\RR}$ are disjoint and
 $\MarkCatPP \setminus \delta_{0, \DocReqPP_0}$ is Poisson, we conclude that
 $\NDifDocs^s$ is a counting process with independent increments. In consequence, 
 it is a inhomogeneous Poisson process.

  The mean function for this process is then given by
  \[
    \E{\NDifDocs_t^s}
    =
    \E{\sum_{\CatArr \in \CatPP} \II{F^s(\DocReqPP_\CatArr) \in [s,t]}}
    =
    \E{\sum_{\CatArr \in \CatPP} \II{\DocReqPP_\CatArr[s,t] \geq 1}}.
  \]
  Formula \eqref{eq:ndifdocsmean} follows from the latter expression and the
  fact that the mean measure $\eta$ of $\MarkCatPP \setminus \delta_{0,
    \DocReqPP_0}$ is defined by
  \[
    \eta([t_1, t_2] \times B)
    =
    \CatArrRate \int_{t_1}^{t_2} \PP{\DocReqPP_\CatArr \in B} \dd a
  \]
  for any Borel subset $B$ of $\PPSpace{\RR}$.

\subsection{Proof of Theorem~\ref{pro:expected_number_of_misses}}
\label{sec:proof_expected_number_of_misses}

In the first r.h.s. term of~\eqref{eq:nmiss_cech}, 
$\NReqs$ is a mixed Poisson random variable with random mean $\AveNReqs$ and thus
\begin{equation}
\label{eq:expect-first-term}
\E{\II{\NReqs \geq 1}} = \E{\E{\II{\NReqs \geq 1}}[\AveNReqs]} = \E{1 - e^{-\AveNReqs}} \defEqual \MinAveNMiss\,.
\end{equation}
 Consider now the second r.h.s. term of~\eqref{eq:nmiss_cech}. 
 Following~\cite{kallenberg2006foundations}[p.106 seq.], since the family
  $\ExitTime{\CSz}^s$ for $s \geq 0$ is defined on the rest of the process and thus is independent
  from the request process $\DocReqPP = \sum_{\iReq=1}^\NReqs \delta_{\Req_\iReq}$ for the tagged
  document, 
  \begin{equation}
    \label{eq:representation_ec}
    \E{\II{N>2} \sum_{\iReq=2}^N \II{\Req_\iReq > \ExitTime{\CSz}^{\Req_{\iReq -1}}} \,\bigg|\, \DocReqPP}
    =
    h(\DocReqPP)
  \end{equation}
  where $h:\PPSpace{\RR} \to \RR^+$ is the measurable function defined by
  \begin{equation*}
    h \left( \sum_{\iReq=1}^n \delta_{t_i} \right)
    =
    \II{n>2} \, \E{\sum_{\iReq=2}^n \II{t_\iReq > \ExitTime{\CSz}^{t_{\iReq-1}}}}.
  \end{equation*}
  Since $\ExitTime{\CSz}^s - s \distEqual \ExitTime{\CSz}$, see Proposition~\ref{pro:characterization_ndifdocs}, 
  the function $h$ can be rewritten as
  \[
    h \left( \sum_{\iReq=1}^n \delta_{t_i} \right)
    =
    \II{n>2} \, \E{\sum_{\iReq=2}^n \II{t_\iReq - t_{\iReq-1} > \ExitTime{\CSz}}}.
  \]
  We use this to compute the expectation of the l.h.s. of eq.~\eqref{eq:representation_ec},
  which combined with~\eqref{eq:nmiss_cech} and \eqref{eq:expect-first-term} yields that
  \[
    \E{\NMiss_\CSz}
    =
    \MinAveNMiss + \E{\II{\NReqs>2} \sum_{\iReq=2}^N \II{\Req_\iReq - \Req_{\iReq-1} > \ExitTime{\CSz}}}.
  \]
  Now, since the canonical intensity $\DocReqInt$ and exit time $\ExitTime{\CSz}$
  are independent from the request process of the tagged document,
  Proposition~\ref{pro:holding_times_functionals} yields that
  \begin{align*}
    \E{\NMiss_\CSz}
    &=
    \MinAveNMiss + \E{\int_0^\infty \!\! \dd w \, \II{w > \ExitTime{\CSz}}
      \int_0^\infty \!\! \dd u \, \DocReqInt(u) \DocReqInt(u+w) e^{-(\DocReqMeanFn(u+w)-\DocReqMeanFn(u))}}
    \\
    &=
    \MinAveNMiss
    +
    \E{\int_0^\infty \!\! \DocReqInt(u)
        e^{-(\DocReqMeanFn(u+\ExitTime{\CSz}) - \DocReqMeanFn(u))}\, \dd u
      -
      \int_0^\infty
      \DocReqInt(u) e^{-(\AveNReqs - \DocReqMeanFn(u))} 
      \, \dd u }
    \\
    &=
    \E{\int_0^\infty \!\! \DocReqInt(u)
        e^{-(\DocReqMeanFn(u+\ExitTime{\CSz}) - \DocReqMeanFn(u))}
        \, \dd u
    },
  \end{align*}
  where we use for the last equality that, since $\AveNReqs(\infty) = \AveNReqs$ and $\AveNReqs(0) = 0$,
  \[
    \int_0^\infty \DocReqInt(u) e^{-(\DocReqMeanFn - \DocReqMeanFn(u))} \, \dd u
    =
    \left[ e^{-(\AveNReqs - \AveNReqs(u))} \right]_0^{\infty}
    =
    1 - e^{-\AveNReqs}\,.
  \]
  This last equation and dominated convergence imply that
  $\lim_{t\to\infty} \downarrow m(t) = \E{1 - e^{-\AveNReqs}}$,
  which concludes the proof.

\subsection{Proof of Proposition~\ref{pro:holding_times_functionals}}
\label{sec:proof_holding_times_functionals}

Recall that, given that the process $\DocReqPP$ has $k$ points, the request
times $(\Req_{\iReq})_{\iReq=1}^k$ have the distribution of the order
statistics of a random variable with density $\dens(t) =
\DocReqInt(t)/\AveNReqs$ for $t \geq 0$, and thus with c.d.f.
$\dist$ given by $\dist(t) = \DocReqMeanFn(t)/\AveNReqs$ for $t \geq 0$.
Let $\rdist = 1 - \dist$ denote the complement of $\dist$. From
order statistics theory, it is known that the
holding times $\Req_\iReq - \Req_{\iReq-1}$ for $2\leq \iReq \leq k$ have density
$\tilde{\dens}_{k,r}$ given for $w\geq0$ by
\[
  \tilde{\dens}_{k,r}(w) \defEqual \frac{k!}{(\iReq-2)! (k - \iReq)!}
  \int_0^\infty \dist^{\iReq-2}(u) \dens(u) \dens(u+w) \rdist^{k - \iReq}(u+w) \,\dd u\,.
\]
Consequently, for $k\ge2$ we have
\[
  \E{F(\Req_\iReq - \Req_{\iReq-1})
  }[\NReqs = k]
  = \int_0^\infty F(w) \tilde{\dens}_{k,\iReq}(w) \, \dd w
\]
and hence
\begin{align}
  \label{eq:big_sum}
  \nonumber
  \E{
    \II{\NReqs \geq 2}
    \sum_{\iReq=2}^\NReqs F(\Req_\iReq - \Req_{\iReq -1})
  }
  &=
  \sum_{k=2}^\infty
  \sum_{\iReq=2}^k
  \E{F(\Req_\iReq - \Req_{\iReq -1})}[\NReqs = k]
  e^{-\AveNReqs} \frac{\AveNReqs^k}{k!}
  \\
  &=
  \int_0^\infty F(w) \,
  e^{-\AveNReqs}
  \sum_{k=2}^\infty
  \sum_{\iReq=2}^k
  \tilde{\dens}_{k,\iReq}(w)
  \, \frac{\AveNReqs^k}{k!} \, \dd w\,.
\end{align}
Now, using the Binomial Theorem,
\[
  \sum_{r=2}^k \frac{k!}{(\iReq-2)! (k - \iReq)!}\dist^{\iReq-2}(u) \rdist^{k-\iReq}(u+w)
  =
  k(k-1) [\dist(u) + \rdist(u+w)]^{k-2}
\]
and thus
\[
  \sum_{\iReq = 2}^k
  \tilde{\dens}_{k,w}(w) \frac{\AveNReqs^k}{k!}
  =
  \int_0^\infty
  [\dist(u) + \rdist(u+w)]^{k-2} \frac{\AveNReqs^k}{(k-2)!}
  g(u) g(u+w) \,\dd u\,,
\]
and we conclude that
\[
  e^{-\AveNReqs}\sum_{k=2}^\infty \sum_{\iReq = 2}^k
  \tilde{\dens}_{k,w}(w) \frac{\AveNReqs^k}{k!}
  =
  \AveNReqs^2 \int_0^\infty e^{-\AveNReqs(1 - \dist(u) - \rdist(u+w))} g(u) g(u+w) \,\dd u\,.
\]
Since $
\AveNReqs \times (1 - \dist(u) - \rdist(u+w))
=
\AveNReqs \times (\dist(u+w) - \dist(u))
=
\DocReqMeanFn(u+w) - \DocReqMeanFn(u)
$ and $
g(u)g(u+w) = \DocReqInt(u) \DocReqInt(u+w)/\AveNReqs^2
$,
Equation~\eqref{eq:big_sum} together with the latter intermediate results
concludes the proof.

\subsection{Proof of Proposition~\ref{pro:key_identity}}
\label{sec:proof_key_identity}

  Since the processes $\DocReqInt_\CatArr(\cdot)$ and $\DocReqInt_0(\cdot - \CatArr)$ have the
  same distribution, and $\DocReqMeanFn_\CatArr(0)=0$ for $a\ge0$, we may write $\NDifDocsMean(t)$ as
  \[
    \NDifDocsMean(t)
    =
    \CatArrRate
    \int_{-\infty}^0
    \E{
      1 - e^{-(\DocReqMeanFn(t-\CatArr) - \DocReqMeanFn(-\CatArr))}
    } \dd \CatArr
    +
    \CatArrRate
    \int_0^t
    \E{
      1 - e^{-\DocReqMeanFn(t-\CatArr)}
    } \dd \CatArr\,.
  \]
  We denote the first integral by $I_1(t)$ and the second by $I_2(t)$.
  The change of variables $\CatArr \mapsto -\CatArr$ yields 
  \[
    I_1(t)
    = \int_{-\infty}^0
    \E{
      1 - e^{-(\DocReqMeanFn(t-\CatArr) - \DocReqMeanFn(-\CatArr))}
    } \dd \CatArr
    =
    \int_0^\infty
    \E{
      1 - e^{-(\DocReqMeanFn(t+\CatArr) - \DocReqMeanFn(\CatArr))}
    } \dd \CatArr
  \]
  and thus, using $\frac{\dd}{\dd \CatArr} e^{-(\DocReqMeanFn(t+\CatArr) - \DocReqMeanFn(\CatArr))}
  = - (\DocReqInt(t + \CatArr)-\DocReqInt(\CatArr))e^{-(\DocReqMeanFn(t+\CatArr) - \DocReqMeanFn(\CatArr))}$,
  \begin{align*}
    I_1'(t)
    &=
    \int_0^\infty
    \E{
      \DocReqInt(t + \CatArr)
      e^{-(\DocReqMeanFn(t+\CatArr) - \DocReqMeanFn(\CatArr))}
    }
    \dd \CatArr
    \\
    &=
    \E{
      e^{-\DocReqMeanFn(t)} - 1
    }
    +
    \E{
      \int_0^\infty
      \!\!\!\!\! \DocReqInt(\CatArr)
      e^{-(\DocReqMeanFn(t+\CatArr) - \DocReqMeanFn(\CatArr))}
      \,\dd \CatArr
    }
    \\
    &=\E{
      e^{-\DocReqMeanFn(t)} - 1
    }
    +
    \AveNMiss(t)\,.
  \end{align*}
  Now, the change of variables $\CatArr \mapsto t-\CatArr$ yields
  \[
    I_2(t)
    =
    \int_0^t
    \E{
      1 - e^{-\DocReqMeanFn(t-\CatArr)}
    } \dd \CatArr
    =
    \int_0^t
    \E{
      1 - e^{-\DocReqMeanFn(\CatArr)}
    } \dd \CatArr\,,
  \]
  and hence $I_2'(t) = \E{1-e^{-\DocReqMeanFn(t)}}$. Thus $\NDifDocsMean'(t) = \CatArrRate (I_1'(t) + I_2'(t)) =
  \CatArrRate \,\AveNMiss(t)$ as claimed. We conclude by integrating this, since $\NDifDocsMean(t) =0$.

\subsection{Proof of Lemma~\ref{lem:gamma}}
\label{sec:proof_gamma}

\subsubsection*{\ref{lem:gamma-i}}
 This is the  optimized exponential Markov inequality which is used for
    the upper bound in Cramer's large deviations
    Theorem, see~\cite[Theorem~2.2.3, Remark (c)]{dembo2009large}.
    
\subsubsection*{\ref{lem:gamma-ii}}
Expanding the $k$-th order central moment of $X_\CSz$ in terms
    of the known moments of $\FPTPoiss{\CSz}$ yields that
    \begin{align*}
      \E{(X_\CSz - 1)^k}
      &=
      \sum_{i = 0}^k \binom{k}{i} \frac{\E{(\FPTPoiss{\CSz})^i}}{\CSz^i} (-1)^{k-i}
      \\
      &=
      \rec{\CSz^k}
      \sum_{i = 0}^k
      \binom{k}{i} (-\CSz)^{k-i}
      \frac{\Gamma(\CSz + i)}{\Gamma(\CSz)}
      \\
      &=
      \rec{\CSz^k}\phi_k(\CSz)\,,
    \end{align*}
    where $\phi_k$ is a polynomial of degree at most $k$. As shown
    in~\cite{amm2011march}, the polynomial $\phi_k$ is actually of degree $\lfloor
    k/2 \rfloor$, which allows us to conclude.

\subsection{Proof of Theorem~\ref{thm:asymptotic_expansion}}
\label{sec:asymptotic_expansion}

  Define the function $f_\AveSojourn$ by
  \[
    f_\AveSojourn(z)
    \defEqual
    \AveNMiss(\IAveNMiss^{-1}(\AveSojourn z))
    =
    \AveNMiss(\ChT{\AveSojourn z}).
  \]
  With the scaling $\CSz = \CatArrRate \AveSojourn$,
  Equation~\eqref{eq:ave_nmiss_change_var2} can be then written as
  \begin{equation}
    \label{eq:ave_nmiss_scaling}
    \E{\NMiss_\CSz}
    =
    \E{f_\AveSojourn \left( \frac{\FPTPoiss{\CSz}}{\CSz} \right)}.
  \end{equation}
  Let again $X_\CSz \defEqual \FPTPoiss{\CSz}/\CSz$ as in Lemma~\ref{lem:gamma},
  and fix $\eta > 0$. Let us decompose the expectation~\eqref{eq:ave_nmiss_scaling} into
  $\E{\NMiss_\CSz} = A_\CSz + B_\CSz$ where
  \[
    A_\CSz \defEqual \E{f_\AveSojourn(X_\CSz) \I_{\{| X_\CSz - 1 | \geq \eta\}}}
    ,\quad
    B_\CSz \defEqual \E{f_\AveSojourn(X_\CSz) \I_{\{| X_\CSz - 1 | < \eta\}}}.
  \]
  
  For $A_\CSz$, recall
  that the function $\AveNMiss$ is bounded by $\E{\AveNReqs} < \infty$, and so is
  $f_\AveSojourn$. Then, by Lemma~\ref{lem:gamma}~\ref{lem:gamma-i}, we have
  \[
    A_\CSz
    \leq
    \E{\AveNReqs} \PP{\left| X_\CSz - 1 \right| \geq \eta}
    \leq
    2\E{\AveNReqs}
    e^{-\CSz \cdot \varphi(1+\eta)}
    = o(1/\CSz)\,.
  \]
  
  For $B_\CSz$, we write a Taylor expansion of $f_\AveSojourn$ at 1 of order two in the form
  \begin{align*}
    f_\AveSojourn(X_\CSz)
    &=
    f_\AveSojourn(1)
    +
    f_\AveSojourn'(1) \left( X_\CSz-1 \right)
    +
    \frac{f_\AveSojourn''(Y_\CSz)}{2} \left(X_\CSz-1 \right)^2
    \\
    &=
    h_\AveSojourn(X_\CSz)
    +
    k_\AveSojourn(X_\CSz, Y_\CSz)\,,
  \end{align*}
  where $Y_\CSz$ is a random variable in the random interval $[1, X_\CSz]\cup [X_\CSz,1]$, and
  \[
    \begin{cases}
      \displaystyle
      h_\AveSojourn(X_\CSz)
      \defEqual
      f_\AveSojourn(1)
      +
      f_\AveSojourn'(1) \left( X_\CSz-1 \right)
      +
      \frac{f_\AveSojourn''(1)}{2} \left(X_\CSz-1 \right)^2,
      \\
      \displaystyle
      k_\AveSojourn(X_\CSz, Y_\CSz) = \frac{f_\AveSojourn''(Y_\CSz) -f_\AveSojourn''(1)}{2} \left(X_\CSz - 1 \right)^2.
    \end{cases}
  \]
  Then $B_\CSz = D_\CSz + E_\CSz$
  where
  \[
    D_\CSz = \E{h_\AveSojourn(X_\CSz) \I_{\{| X_\CSz - 1 | < \eta\}}}
    ,\quad
    E_\CSz = \E{k_\AveSojourn(X_\CSz, Y_\CSz) \I_{\{| X_\CSz - 1 | < \eta\}}}.
  \]
  We then compute
  \begin{equation}
    \label{eq:h_gdv}
    D_\CSz
    =
    \E{h_\AveSojourn(X_\CSz)} - \E{h_\AveSojourn(X_\CSz) \I_{\{| X_\CSz - 1 | \geq \eta\}}}
  \end{equation}
  where
  \[
    \E{h_\AveSojourn(X_\CSz)} = f_\AveSojourn(1) + \frac{f_\AveSojourn''(1)}{2\CSz}
  \]
  since $\E{X_\CSz - 1} = 0$ and $\E{(X_\CSz - 1)^2} = 1/\CSz$. Besides, to
  deal with the second term $\E{h_\AveSojourn(X_\CSz) \I_{\{| X_\CSz - 1 | \geq \eta\}}}$
  in the right-hand side of~\eqref{eq:h_gdv}, we use the Cauchy-Schwarz inequality
  to write
  \[
    \left| \E{h_\AveSojourn(X_\CSz) \I_{\{| X_\CSz - 1 | \geq \eta\}}} \right|
    \leq
    \sqrt{\E{h_\AveSojourn(X_\CSz)^2}} \sqrt{\PP{| X_\CSz - 1 | \geq \eta}}
  \]
  and note that $\E{h_\AveSojourn(X_\CSz)^2} = O(1)$ for all $\CSz > 1$ by
  Lemma~\ref{lem:gamma}~\ref{lem:gamma-ii}. Applying Lemma~\ref{lem:gamma}~\ref{lem:gamma-i} then
  eventually shows that $\E{h_\AveSojourn(X_\CSz) \I_{\{| X_\CSz - 1 | \geq \eta\}}}$ is
  $O(e^{-\frac{\CSz}{2} \cdot \varphi(1+\eta)})$ which is, in particular,
  $o(1/\CSz)$. At this stage, we therefore conclude from~\eqref{eq:h_gdv} and
  the latter discussion that
  \begin{equation}
    \label{eq:mid_stage}
    D_\CSz
    =
    f_\AveSojourn(1) + \frac{f_\AveSojourn''(1)}{2\CSz} + o\left(\frac{1}{\CSz} \right).
  \end{equation}

  Lastly, we show that the term $E_\CSz$ is $o(1/\CSz)$. To this aim, it is
  sufficient to show that
  the sequence $W_\CSz = \CSz \cdot k_\AveSojourn(X_\CSz, Y_\CSz)$ for $\CSz > 1$
  converges in probability to zero and that it is uniformly integrable (\cite[Theorem 13.7]{williams1991probability}).

  \bull To prove the convergence in probability, note that since $X_\CSz \to 1$
  a.s. when $\CSz \to \infty$ and $Y_\CSz \in [1, X_\CSz]\cup [X_\CSz,1]$, then $Y_\CSz \to
  1$ a.s. It follows from the continuity of $f_\AveSojourn''$ in the interval
  $(1-\eta, 1+\eta)$ that $f_\AveSojourn''(1) - f_\AveSojourn''(Y_\CSz) \to 0$ a.s. and, in
  particular, in probability. On the other hand, since $X_\CSz =
  \FPTPoiss{\CSz}/\CSz$ is an average of $\CSz$ \iid random variables with mean
  1, the continuous mapping theorem for weak limits implies that $C(X_\CSz -
  1)^2$ converges in distribution (the limit distribution is $\chi^2$ with
  parameter $1$ but this specific limit has no importance for the present
  proof). Finally, since $\II{\left| X_\CSz - 1 \right| < \eta} \to 1$ a.s.,
  Slutsky's theorem (\cite[Th. 11.4]{gut2006probability}) allows us to
  conclude that
  \[
    W_\CSz = \frac{f_\AveSojourn''(1) - f_\AveSojourn''(Y_\CSz)}{2} \times
    \CSz(X_\CSz - 1)^2 \times \II{\left| X_\CSz - 1 \right| < \eta} \to 0
  \]
  in distribution as $\CSz \to \infty$, and thus in
  probability as well.

  \bull To prove the uniform integrability of $W_\CSz$, it suffices to show that
  \begin{equation}
    \label{eq:ineq_ui}
    \sup_{\CSz \geq 1} \E{W_\CSz^2} < \infty
  \end{equation}
  (see~\cite[Theorem 13.3]{williams1991probability}).
  Since $f_\AveSojourn$ is twice continuously differentiable,
  \[
    \left|
      \frac{f_\AveSojourn''(1) -f_\AveSojourn''(Y_\CSz)}{2}
      \I_{\{| X_\CSz - 1 | < \eta \}}
    \right|
    \leq
    K
  \]
  for any $\CSz > 1$ and for some constant $K$ depending on $\eta$ only.
  By Lemma~\ref{lem:gamma}, we further have $\E{\CSz^2 \left(X_\CSz -
      1 \right)^4} = \CSz^2 \times O(\CSz^{-2}) = O(1)$. We finally conclude
  that $\E{W_\CSz^2} < K^2 \times O(1) < \infty$, which proves the claimed
  property~\eqref{eq:ineq_ui}.

  Finally gathering $\E{\NMiss_\CSz} = A_\CSz + B_\CSz = A_\CSz + D_\CSz +
  E_\CSz$ with $A_\CSz = o(1/\CSz)$, $E_\CSz = o(1/\CSz)$ and $D_\CSz$ expanded
  in~\eqref{eq:mid_stage}, we thus have proved that
  \begin{equation}
    \label{eq:asymptotic_expansion_f}
    \E{\NMiss_\CSz}
    =
    f_\AveSojourn(1) + \frac{f_\AveSojourn''(1)}{2\CSz} + o\left(\rec{\CSz} \right)
  \end{equation}
  as $\CSz \to \infty$.
  To conclude the proof, we now express the function $f_\AveSojourn$ and its derivatives
  at 1 in terms of function $\AveNMiss$ and its derivatives at $\ChT{\AveSojourn}$. By
  implicit differentiation,
  \[
    f_\AveSojourn'(z)
    =
    \frac{\AveNMiss'(\ChT{\AveSojourn z})}{\AveNMiss(\ChT{\AveSojourn z})} \AveSojourn
    ,\quad
    f_\AveSojourn''(z) = \frac{\AveSojourn^2}{\AveNMiss(\ChT{\AveSojourn z})^2}
    \left(
      \AveNMiss''(\ChT{\AveSojourn z})
      -
      \frac
      {\AveNMiss'(\ChT{\AveSojourn z})^2}
      {\AveNMiss(\ChT{\AveSojourn z})}
    \right),
  \]
  and the values of $f_\AveSojourn'$ and $f_\AveSojourn''$ at $z = 1$ consequently follow.
  Replacing them into~\eqref{eq:asymptotic_expansion_f}, we finally prove the
  expansion~\eqref{eq:asymptotic_expansion}, as claimed.

\subsection{Proof of Proposition~\ref{pro:scale_family}}
\label{sec:differentiability}
Differentiating \eqref{eq:ave_nmiss_ttl} under the integral sign, with
$\DocReqInt(u)$ expressed by \eqref{eq:scale_family_def}, readily gives
formulas~\eqref{eq:derivatives} after using the change of variables $ u \mapsto
u/\Lifespan$. The validity of these formulas can then be simply proved by showing that
these integrals for $\AveNMiss'$ and $\AveNMiss''$ are finite.

Given $t > 0$ and $\Lifespan$, define $u^* = u^*(t, \Lifespan) = \inf \lbrace u:
f(u) > f(u + t/\Lifespan) \rbrace$, so that $f(u) \leq f(u + t/\Lifespan)$ for
$u \leq u^*$ and $f(u) > f(u + t/\Lifespan)$ for $u > u^*$. The existence of
$u^*$ is ensured from the unimodality of $f$, and we have $u^* = 0$ if and only if
$f$ is non-increasing. Finally, define $\tilde{u} = \inf \lbrace u : f(u) =
\max f\rbrace$ (see Fig.~\ref{fig:unimodal_schema} for a schematic view
of these definitions).
\begin{figure}[!htb]
  \centering
  \includegraphics[scale=0.4]{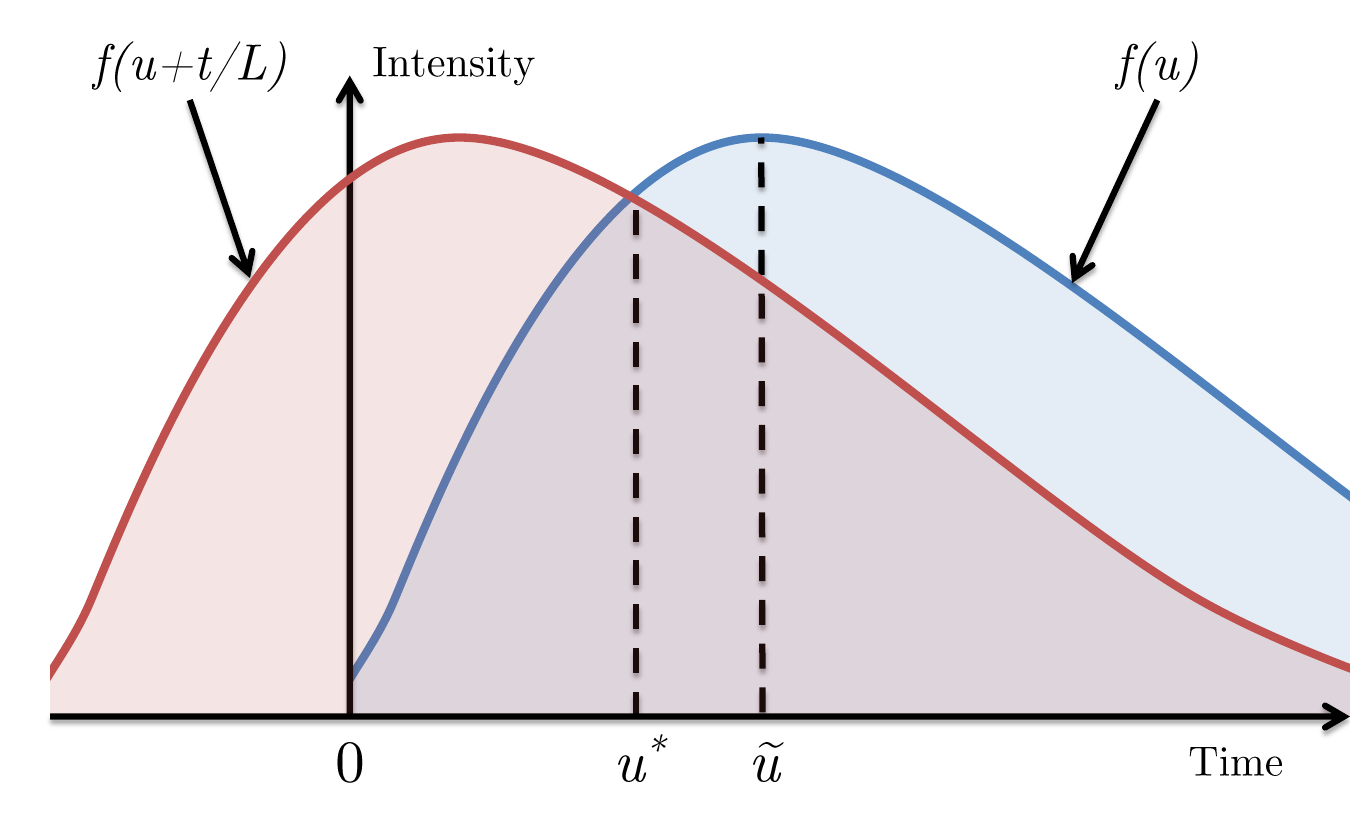}
  \caption{\textit{Schema for unimodal $f$}}
  \label{fig:unimodal_schema}
\end{figure}

Since $f$ is differentiable and unimodal, it is quasi-concave
(see~\cite[Lemma 2.4.1]{dos2013quasiconvex}), that is, for any $0 \leq \eta
\leq 1$ we have $f(\eta u_1 + (1 - \eta) u_2) \geq f(u_1) \mini f(u_2)$ for
$u_1,u_2 \geq 0$. As a consequence, for any $t > 0$, the area under the graph
of $f$ in the interval $[u, u+t/\Lifespan]$ can be bounded below by
\begin{equation}
  \label{eq:lower_bounds_F}
  F(u + t/\Lifespan) - F(u)
  \geq
  \begin{cases}
    f(u) \cdot t / \Lifespan, & \quad u \leq u^*, \\
    f(u + t/\Lifespan) \cdot t/\Lifespan, & \quad u > u^*.
  \end{cases}
\end{equation}

We now partition the integrals in \eqref{eq:derivatives} into their
contributions from intervals $[0, u^*]$ and $[u^*, \infty)$, respectively, and
bound them separately.
For the first derivative $\AveNMiss(t)$, the lower bounds~\eqref{eq:lower_bounds_F} yield
\begin{align*}
  |\AveNMiss'(t)|
  &\leq
  \E{
    \Pop \Lifespan
    \int_0^{u^*}
    f(u+t/\Lifespan) \Pop f(u) e^{-\Pop f(u) t} \, \dd u
  }
  \\
  &+
  \E{
    \Pop \Lifespan
    \int_{u^*}^\infty
    f(u) \Pop f(u+t/\Lifespan) e^{-\Pop f(u + t/\Lifespan) t} \, \dd u
  } \leq \frac{2}{et} \E{\Pop \Lifespan}
\end{align*}
where the last inequality is justified by the bound $xe^{-ax} \leq 1/ae$ for any fixed
$a > 0$, and
the fact that $\int f = 1$.

For the second derivative $\AveNMiss''(t)$, we introduce the integrals
\begin{align*}
  A_1(t)
  &=
  \E{\Pop \Lifespan \int_0^\infty \Pop^2 f(u)f(u+t/\Lifespan)^2 e^{-\Pop \Lifespan ( F(u + t/\Lifespan) - F(u)) \, \dd u}},
  \\
  A_2(t)
  &=
  \E{\Pop \int_0^\infty \Pop f(u)f'(u+t/\Lifespan) e^{-\Pop \Lifespan ( F(u + t/\Lifespan) - F(u)) \, \dd u}}
\end{align*}
so that $|m''(t)| \leq |A_1(t)| + |A_2(t)|$. For $A_1(t)$, we have
\begin{align*}
  |A_1(t)|
  &\leq
  \E{
    \Pop \Lifespan \int_0^{u^*}
    f(u+t/\Lifespan)^2 f(u) \Pop^2 e^{-\Pop f(u) t} \, \dd u
  }
  \\
  &+
  \E{
    \Pop \Lifespan \int_{u^*}^\infty
    f(u) \Pop^2 f(u+t/\Lifespan)^2 e^{-\Pop f(u + t/\Lifespan) t} \, \dd u
  }
  \\
  &\leq
  \E{
    \frac{ 4 \Pop \Lifespan }{e^2 t^2 f(0)} \int f^2
  }
  +
  \E{
    \frac{\Pop \Lifespan}{e t}
  }
  \leq \rec{et} \left( 1 + \frac{4}{f(0)et} \int f^2 \right) \E{\Pop \Lifespan}
  < \infty
\end{align*}
where the last inequality follows from the bounds $x e^{-ax} \leq 1/ae$,
$x^2 e^{-ax} \leq 4/a^2e^2$ for any fixed $a > 0$, and the fact that $0 < f(0) \leq f(u) \leq f(u+t/\Lifespan)$ for $u \in [0,
u^*]$. Regarding $A_2(t)$, we have
\begin{align*}
  |A_2(t)|
  &\leq
  \E{\Pop \int_0^{u^*} \Pop f(u)|f'(u+t/\Lifespan)| e^{-\Pop f(u) t} \, \dd u}
  \\
  &+
  \E{\Pop \int_{u^*}^\infty \Pop f(u) |f'(u+t/\Lifespan)| e^{-\Pop f(u + t/\Lifespan) t} \, \dd u}
  \\
  &= B_1(t) + B_2(t).
\end{align*}
Using again $xe^{-ax} \leq 1/ae$, we have
\[
  B_1(t) \leq \frac{\E{\Pop}}{et} \int |f'| < \infty.
\]
Finally, to deal with $B_2(t)$ we note that $f'(u+t/\Lifespan) \leq 0$ for $u
\in [u^*, \infty)$ and thus $|f'(u + t/\Lifespan)| = -f'(u + t/\Lifespan)$. We
then use integration by parts to obtain
\begin{align*}
  B_2(t)
  &=
  -
  \rec{t}
  \E{\Pop
    \left(
      \left[-e^{-\Pop f(u + t/\Lifespan) t }f(u) \right]_{u=u^*}^\infty
      +
      \int_{u^*}^\infty f'(u) e^{-\Pop f(u+t/\Lifespan) t} \, \dd u
    \right)
  }
  \\
  &=
  -
  \rec{t}\E{\Pop f(u^*)e^{-\Pop f(u^* +t/\Lifespan)}}
  \\
  &\quad -\rec{t}
  \E{\Pop \int_{u^*}^{\tilde{u}} f'(u) e^{-\Pop f(u+t/\Lifespan) t} \, \dd u}
  -
  \rec{t} \E{\Pop \int_{\tilde{u}}^\infty \!\! f'(u) e^{-\Pop f(u+t/\Lifespan) t} \, \dd u}.
\end{align*}
The first term in the latter expression is trivially negative; the second is
also negative since $f$ is non-decreasing in $[0, \tilde{u})$. As a consequence
both terms can be ignored to obtain
\begin{align*}
  B_2(t) \leq
  \rec{t} \E{\Pop \int_{\tilde{u}}^\infty |f'(u)| e^{-\Pop f(u+t/\Lifespan) t} \, \dd u}
  \leq \frac{\E{\Pop}}{t} \int |f'| < \infty
\end{align*}
thus concluding the proof.

\section*{Acknowledgements}
The authors wish to thank Bruno Kauffmann at Orange Labs for his deep insight
and fruitful discussions along with Byron Schmuland for pointing out
reference~\cite{amm2011march} in
\href{https://math.stackexchange.com/questions/1383069/degree-of-polynomial-in-centered-moments-of-gamman-1}{this
  question} posed at the \emph{Mathematics StackExchange} website.

\bibliographystyle{acmtrans-ims}

\end{document}